\numberwithin{equation}{section}
\def\di{\displaystyle}
\def\O{\Omega}
\def\e{\epsilon}
\def\S{\Sigma} 
\def\n{\nabla}
\def\P{\partial}
\def\p{\partial}
\def\bn{\overline \nabla}
\def\bd{\overline \Delta}
\def\a{\alpha}
\def\b{\beta}
\def\n{\nabla}
\def\tr{\mathrm{tr}}
\def\O{\Omega}
\def\p{\partial}
\def\e{\epsilon}
\def\a{\alpha}
\def\b{\beta}
\def\g{\gamma}
\def\k{\kappa}
\def\l{\lambda}
\def\De{\Delta}
\def\n{\nabla}
\def\<{\langle}
\def\>{\rangle}
\def\div{{\rm div}}
\def\De{\Delta}
\def\n{\nabla}
\def\ode{\overline{\Delta}}
\def\on{\overline{\nabla}}
\def\SS{\mathbb{S}}
\def\HH{\mathbb{H}}
\def\tr{\mathrm{tr}}
\def\O{\Omega}
\def\p{\partial}
\def\e{\epsilon}
\def\a{\alpha}
\def\b{\beta}
\def\g{\gamma}
\def\l{\lambda}
\def\wh{\widehat}
    \newtheorem{prop}{Proposition}[section]
    \newtheorem{theo}[prop]{Theorem}
    \newtheorem{lemm}[prop]{Lemma}
    \newtheorem{coro}[prop]{Corollary}
    \newtheorem{rema}[prop]{Remark}
    \newtheorem{defi}[prop]{Definition}
\begin{document}
\title[An integral formula on sub-static manifolds]{An integral formula  and its applications on
    sub-static manifolds
}

\author{Junfang Li and Chao Xia}

\address{Department of Mathematics\\
University of Alabama at Birmingham\\
Birmingham, AL 35294}
\email{jfli@uab.edu}

\address{School of Mathematical Sciences\\
Xiamen University\\
361005, Xiamen, P.R. China}
\email{chaoxia@xmu.edu.cn}
\thanks{Research of CX is  supported in part by the Fundamental Research Funds for the Central Universities (Grant No. 20720150012), NSFC (Grant No. 11501480)  and CRC Postdoc Fellowship. }

\begin{abstract}
    In this article, we first establish the main tool - an integral formula (\ref{equ10.5 nc}) for Riemannian manifolds with multiple boundary components (or without boundary). This formula  generalizes Reilly's original formula from \cite{Re2} and the recent result from \cite{QX}. It provides a robust tool for sub-static manifolds regardless of the underlying topology.  
    
    Using (\ref{equ10.5 nc}) and suitable elliptic PDEs, we prove Heintze-Karcher type  inequalities for bounded domains  in general sub-static manifolds which recovers some of the results from Brendle \cite{Br} as special cases.
    
    On the other hand, we prove a Minkowski inequality for {\it static convex} hypersurfaces in a sub-static warped product manifold. Moreover, we obtain an almost Schur lemma for {\it horo-convex} hypersurfaces in the hyperbolic space and {\it convex} hypersurfaces in the hemi-sphere, which can be viewed as a special Alexandrov-Fenchel inequality.
\end{abstract}

\date{}
\maketitle

\section{Introduction}
Reilly established a series of integral formulas on hypersurfaces in Euclidean space in  \cite{Re1} in early 70s and later generalized them to Riemannian manifolds in \cite{Re2}. Since then, this type of integral formulas have been used for many problems in geometric analysis during the past decades. Reilly's formula is particularly useful for manifolds with nonnegative Ricci curvature. However, for rest of the cases, not much information can be obtained from the formula. Very recently, the second named author together with Qiu, made a first try to establish a formula which is useful when manifolds allow negative curvature \cite{QX}.
In particular, they can prove geometric inequalities which are sharp in space forms.

Along the line of \cite{QX}, we will prove a more general integral formula which includes the previous work of \cite{Re1, Re2, QX} as special cases. More interestingly, this new integral formula leads to geometric inequalities which are sharp in a much larger class of Riemannian manifolds besides the space forms. 

The rest of the introduction will split into several parts. First, we introduce the new integral formula. Then we give several Heintze-Karcher type inequalities which can be proved by the new formula. We note these inequalities, when on substatic warped product spaces, are due to Brendle \cite{Br} by a completely different method. In the end, we will show that our elliptic method can be used to prove Minkowski type inequalities and almost Schur type theorems.

The most general form of our integral formula is presented in Section \ref{sec 3}, see Theorem \ref{theo grf}. For applications in this article, we will only focus on the following special case.

\begin{theo}
    \label{theo rf nc}
     Let $(M^n, \bar g)$ be an $n$-dimensional  smooth Riemannian manifold and $\O\subset M$ be a bounded domain with smooth boundary $\p \O$. Let $V\in C^{\infty}(\overline{\O})$ be a smooth function 
such that $\frac{\bn^2 V}{V}$ is continuous up to $\p\O$. 
     
Then for any $f\in C^\infty(\overline{\O})$, 
the following integral identity holds: 
\begin{eqnarray}        \label{equ10.5 nc}
           && \di\int_{\O}V \left(\bd f-\frac{\bd V}{V}f\right)^2-V\left|\bn^2 f-\frac{\bn^2 V}{V}f\right|^2 d\O\\&=&\di\int_{\p\O}V h(\n z,\n z)+2V u\Delta z+V Hu^2+V_{,\nu}|\n z|^2+2z\bn^2 V(\n z, \nu) dA\nonumber\\ 
            &&+\di\int_{\p\O} -2zu (\De V+HV_{,\nu})-z^2\frac{\bn^2 V-\bd V \bar g}{V}(\bn V, \nu) dA\nonumber\\
            &&\di+  \int_{\O}V\cdot Q\left(\bn f-\frac{\bn V}{V}f, \bn f-\frac{\bn V}{V}f\right) d\O.\nonumber
\end{eqnarray}
Here $\nu$ is the outward unit normal vector, $z= f|_{\P\O}$, $ u=\bn_\nu f$, $V_{,\nu}=\bn_\nu V$, $h(\cdot ,\cdot)$ and $H$ are the second fundamental form and the mean curvature (the sum of principal curvatures) of $\p\O$ respectively. 
$Q$ is a $(0,2)$ tensor field on $\O$  defined by
\begin{align}
    V\cdot Q:= \bd V \bar g-\bn^2 V+V {\rm Ric},
    \label{def Q}
\end{align}
where ${\rm Ric}$ is the Ricci curvature of $(M,\bar g)$.
\end{theo}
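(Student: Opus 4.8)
The plan is to prove \eqref{equ10.5 nc} by the Bochner/Reilly integration-by-parts technique carried out with the weight $V$, organized around the modified gradient $W:=\bn f-\frac{\bn V}{V}f$ and the modified Hessian $P:=\bn^2 f-\frac{\bn^2 V}{V}f$, for which $\tr P=\bd f-\frac{\bd V}{V}f$. In this notation the left-hand side is $\int_\O V\bigl[(\tr P)^2-|P|^2\bigr]\,d\O$, and the target is to show that, up to the boundary integrals listed in the statement, the bulk reduces to $\int_\O V\cdot Q(W,W)\,d\O$. First I would expand the integrand according to the number of factors of $f$ into three groups, namely the pure term $V\bigl[(\bd f)^2-|\bn^2 f|^2\bigr]$, the cross term $-2f\bigl[\bd f\,\bd V-\<\bn^2 f,\bn^2 V\>\bigr]$, and the term $\frac{f^2}{V}\bigl[(\bd V)^2-|\bn^2 V|^2\bigr]$, and then treat each by integration by parts against $V$, $f$, and $\frac{f^2}{V}$ respectively.

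The two pointwise inputs are the Bochner formula $\frac12\bd|\bn f|^2=|\bn^2 f|^2+\<\bn f,\bn\bd f\>+{\rm Ric}(\bn f,\bn f)$ and its polarization, which for two functions reads $\<\bn^2 V,\bn^2 f\>=\div\bigl(\bn^2 V(\bn f,\cdot)\bigr)-\<\bn\bd V,\bn f\>-{\rm Ric}(\bn V,\bn f)$; the latter follows from the commutation identity $\bd(\bn_j V)=\bn_j(\bd V)+{\rm Ric}_{jk}\bn^k V$. Applying Bochner to the pure term produces $\int_\O V\,{\rm Ric}(\bn f,\bn f)$, applying the polarized identity to the cross term produces $-2\int_\O f\,{\rm Ric}(\bn f,\bn V)$, and the analogous manipulation of the third term produces $\int_\O\frac{f^2}{V}{\rm Ric}(\bn V,\bn V)$; together these are exactly $\int_\O V\,{\rm Ric}(W,W)$. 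The remaining bulk pieces, all built from $\bd V$ and $\bn^2 V$, are designed to assemble into $\int_\O\bigl[\bd V\,|W|^2-\bn^2 V(W,W)\bigr]$, so that the three Ricci contributions and these combine into $\int_\O V\cdot Q(W,W)$ by the definition \eqref{def Q} of $Q$. Verifying that every stray $\bd V$- and $\bn^2 V$-term regroups in this way, with the correct coefficients relative to the expansion of $|W|^2$ and $\bn^2 V(W,W)$, is the computational heart of the argument.

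All the boundary contributions are then collected via the divergence theorem and rewritten through the standard decompositions on $\p\O$: the splitting $\bn f=\n z+u\nu$ giving $|\bn f|^2=|\n z|^2+u^2$, the trace decomposition $\bd f=\De z+Hu+\bn^2 f(\nu,\nu)$, and the identity $\bn^2 f(\n z,\nu)=\<\n z,\n u\>-h(\n z,\n z)$ relating the normal derivative of the tangential gradient to the second fundamental form; the same decompositions are applied to the derivatives of $V$. Carrying $V$ through each integration by parts generates the extra weight-dependent boundary terms $V_{,\nu}|\n z|^2$, $2z\,\bn^2 V(\n z,\nu)$, $-2zu(\De V+HV_{,\nu})$ and $-z^2\frac{\bn^2 V-\bd V\bar g}{V}(\bn V,\nu)$, while the unweighted pieces reproduce $Vh(\n z,\n z)+2Vu\De z+VHu^2$. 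The main obstacle is precisely this boundary bookkeeping: one must track the normal and tangential parts of $\bn V$ and $\bn^2 V$ consistently and confirm that the many terms coalesce into exactly the two boundary integrals displayed, with no residue. The hypothesis that $\frac{\bn^2 V}{V}$ extends continuously up to $\p\O$ is what guarantees that $P$, and in particular the boundary integrand $z^2\frac{\bn^2 V-\bd V\bar g}{V}(\bn V,\nu)$, is well defined there, so that every integral in \eqref{equ10.5 nc} is finite and the divergence theorem applies.
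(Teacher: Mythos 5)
Your proposal is correct, and it reaches \eqref{equ10.5 nc} by a genuinely different route from the paper's. The paper first proves the general formula \eqref{equ10} (Theorem \ref{theo grf}) for an arbitrary symmetric tensor $P_{ij}$ and then specializes to $P_{ij}=\frac{1}{V}(V_{,ij}-\bd V\bar g_{ij})$; in that specialization the $f$-dependent bulk terms are tamed by Lemmas \ref{lemma 1} and \ref{lemma 2}, which rest on differentiating the identity $V_{,ij}+\frac{1}{n-1}PV\bar g_{ij}-VP_{ij}=0$ and on the contracted second Bianchi identity $R_{ij,i}=\frac12 {\rm R}_{,j}$, followed by a completion of the square. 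You bypass both the auxiliary tensor and the Bianchi identity: your grouping by powers of $f$ parallels the paper's expansion, but you treat each group by the weighted Reilly/Bochner computation (weights $V$, $-2f$, $f^2/V$) using only the Ricci commutation identity. This plan does close: the third-derivative terms $\<\bn\bd V,\cdot\>$ cancel between the cross group and the other two, the three Ricci contributions polarize into $\int_\O V{\rm Ric}(W,W)$ with $W=\bn f-\frac{\bn V}{V}f$, and the non-Ricci residues — $[\bn^2 V-\bd V\bar g](\bn f,\bn f)$ from the pure group, $2[\bd V\bar g-\bn^2 V](\bn f,\bn f)$ from the cross group, and $-\frac{2f}{V}[\bd V\bar g-\bn^2 V](\bn f,\bn V)+\frac{f^2}{V^2}[\bd V\bar g-\bn^2 V](\bn V,\bn V)$ from the $f^2/V$ group — sum exactly to $[\bd V\bar g-\bn^2 V](W,W)=\bd V|W|^2-\bn^2 V(W,W)$, as you predict; likewise the cross group's boundary contribution becomes $2z\bn^2 V(\n z,\nu)-2zu(\De V+HV_{,\nu})$ after splitting $\bn f=\n z+u\nu$ and $\bd V=\De V+HV_{,\nu}+V_{,\nu\nu}$, and the $f^2/V$ group produces precisely $-z^2\frac{\bn^2 V-\bd V\bar g}{V}(\bn V,\nu)$, so the boundary bookkeeping leaves no residue. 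The trade-off between the two approaches: the paper's detour yields a stand-alone general formula that contains Reilly's formula and that of \cite{QX} as further special cases, whereas your argument is more self-contained and elementary (no auxiliary tensor, no Bianchi identity) and makes structurally transparent why the bulk term is $Q$ contracted twice against $W$ — but it delivers only the special case stated in Theorem \ref{theo rf nc}.
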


\ 

For more information on the notations, we refer to the beginning of Section \ref{sec 2}.  
Clearly, when $V\equiv 1$ and $V\cdot Q={\rm Ric}$, we recover Reilly's original formula. Just as in Reilly's original formula where non-negative Ricci in the interior integral is crucial for any likely applications, non-negative definite $(0,2)$-tensor $Q$ in our integral formula plays a crucial role in our setting.  

For convenience, we introduce a Riemannian triple $(M^n, \bar g, V)$ which constitutes an $n$-dimensional  smooth connected manifold $M$, a Riemannian metric $\bar g$ on $M$ and  a smooth nontrivial 
function $V$ on $M$. Throughout this paper, we assume $n\geq 3$. We give the following definitions. 
\begin{defi}\label{defi static}
A Riemannian triple $(M, \bar g, V)$ is called {\em static} if 
\begin{eqnarray}\label{static potential}
\bd V\bar g-\bn^2 V+V{\rm Ric}= 0.
\end{eqnarray}
A Riemannian triple $(M, \bar g, V)$ is called {\em sub-static} if there exists a smooth $(0,2)$-tensor $Q$ on $M$ such that
\begin{eqnarray}\label{static potential inequality}
    VQ:=\bd V\bar g-\bn^2 V+V{\rm Ric}\ge 0.
\end{eqnarray}
In both cases, we call $V$ a {\em potential function}.
\end{defi}

Our definition of {\rm static} and {\rm sub-static} Riemannian triples comes naturally from and has roots in the study of static spacetime in general relativity. More background details can be found in Section \ref{sec 2}.  

\

We now focus on using the integral formula to prove geometric inequalities for domains in static or sub-static Riemannian triples. Some of the inequalities was initiated by Brendle et. al. \cite{Br, BHW}.

As the first application of (\ref{equ10.5 nc}), we establish the Heintze-Karcher type inequalities. Similar type inequalities have been studied \cite{Br} using a parabolic approach, see also  \cite{WWZ}.

Different from the previous work, besides assuming the Riemannian triple is sub-static, we do not impose any rotational symmetry or topological constraints for the ambient manifolds. In particular, we allow the domain to have multiple boundary components which corresponds to a bounded region containing multiple event horizons on a time slice of a static spacetime. 
 
\begin{theo}[Heintze-Karcher type inequality]
    \label{HK theo}
    Let $(M^n, \bar g, V)$ be an $n$-dimensional sub-static Riemannian triple. Let $\O\subset M$ be a bounded domain with $\p \O= \S\cup \bigcup_{l=1}^{\tau} N_l$ satisfying the {\em inner boundary condition} defined in Definition \ref{homo} and suppose the outermost boundary hypersurface $\S$ is {\em mean convex}. 
 Assume $V>0$ in $\O$. 
 
Then the following inequality holds: 
     \begin{equation}
         n\di\left(\int_{\O}Vd\O+\sum^\tau_{l=1}c_l\int_{N_l}  V_{,{\nu}} dA\right)\le\di (n-1)\int_{\S}\frac{V}{H} dA,
         \label{Heintze-Karcher}
     \end{equation}
     where $$c_l =\di\max_{z\in N_l}\left\{- \di\frac{n-1}{n}\frac{V(z)}{(\bd V \bar g_{\nu\nu}-\bn^2_{\nu\nu} V)(z)}\right\}<0$$ are constants.
Moreover, if equality in \eqref{Heintze-Karcher} holds, then $\S$ is umbilical.
     
     When $\tau=0$, i.e. $\P\O$ has only one component $\S$, then \eqref{Heintze-Karcher} is reduced to
\begin{equation}
         n\di\int_{\O}V d\O\le\di (n-1)\int_{\S}\frac{V}{H} dA.\\
         \label{Heintze-Karcher null}
     \end{equation}
\end{theo}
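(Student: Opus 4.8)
The plan is to test the master identity (\ref{equ10.5 nc}) against the solution of a weighted elliptic problem and then read off the inequality from the sub-static hypothesis together with an algebraic trace inequality. Concretely, I would solve
\[ \bd f-\frac{\bd V}{V}f=1\ \text{ in }\O,\qquad f=0\ \text{ on }\S, \]
imposing on each inner component $N_l$ the inner boundary condition of Definition~\ref{homo}, which forces $N_l$ to be a level set of $V$ and $f$ to equal a constant $z_l$ there. Existence and enough boundary regularity to legitimately substitute $f$ into (\ref{equ10.5 nc}) --- in particular continuity of $\bn^2 f$ up to $\p\O$ --- come from standard Schauder theory; the only delicate point is the admissibility of the zeroth order coefficient $\bd V/V$, which is precisely guaranteed by the standing hypothesis that $\bn^2 V/V$ extends continuously to $\p\O$ together with $V>0$ in $\O$.

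With this $f$ the tensor $T:=\bn^2 f-\frac{\bn^2 V}{V}f$ has trace $\tr T=\bd f-\frac{\bd V}{V}f=1$, so the interior integrand on the left of (\ref{equ10.5 nc}) is $V((\tr T)^2-|T|^2)$. The pointwise Cauchy--Schwarz bound $|T|^2\ge\frac1n(\tr T)^2=\frac1n$ gives $(\tr T)^2-|T|^2\le\frac{n-1}{n}$, while the sub-static condition $Q\ge0$ lets me discard the final interior integral in (\ref{equ10.5 nc}). The identity therefore degrades to the one-sided estimate
\[ \frac{n-1}{n}\int_\O V\,d\O\ \ge\ \int_{\p\O}\big(\text{boundary integrand of }(\ref{equ10.5 nc})\big). \]

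Next I would evaluate the boundary integrand on each piece. On the outermost $\S$ the Dirichlet condition $z=f|_\S=0$ annihilates every term containing $z$ or its tangential derivatives, leaving exactly $\int_\S VHu^2\,dA$, where $u=\bn_\nu f$. On each $N_l$, using that $N_l$ is equipotential ($\bn V=V_{,\nu}\nu$) and that $f\equiv z_l$ there, a direct substitution --- together with the level-set identity $\bd V-\bn^2_{\nu\nu}V=HV_{,\nu}$ --- collapses the $N_l$-integrand into the perfect square $VH\big(u-\tfrac{z_l V_{,\nu}}{V}\big)^2$. In parallel, Green's identity for $V\bd f-f\bd V=\div(V\bn f-f\bn V)$ yields the flux balance
\[ \int_\O V\,d\O=\int_\S Vu\,dA+\sum_{l=1}^\tau\int_{N_l}\big(Vu-z_l V_{,\nu}\big)\,dA, \]
which expresses $\int_\S Vu$ through $\int_\O V$ and the inner fluxes $\int_{N_l}V_{,\nu}$. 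The constants $c_l$ then arise from a pointwise optimization that bounds each inner contribution against $\int_{N_l}V_{,\nu}$, the ratio $-\frac{n-1}{n}V/(\bd V\bar g_{\nu\nu}-\bn^2_{\nu\nu}V)$ being exactly the pointwise factor produced in this step, whence the maximum over $N_l$ in the definition of $c_l$.

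The inequality is then closed by a single Cauchy--Schwarz on $\S$,
\[ \Big(\int_\S Vu\,dA\Big)^2\le\Big(\int_\S VHu^2\,dA\Big)\Big(\int_\S\frac{V}{H}\,dA\Big), \]
whose validity rests on the mean convexity $H>0$ of $\S$; combining it with the two displayed relations and the inner bounds produces (\ref{Heintze-Karcher}), and the case $\tau=0$ reduces immediately to (\ref{Heintze-Karcher null}) since then no inner flux terms are present. For the rigidity statement I would track the two inequalities that were used: equality forces $|T|^2=\frac1n(\tr T)^2$, i.e. $\bn^2 f-\frac{\bn^2 V}{V}f=\frac1n\bar g$, and equality in the Cauchy--Schwarz step forces $u$ to be a constant multiple of $1/H$ on $\S$; substituting the first relation into the second fundamental form then shows $\S$ is totally umbilical. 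I expect the genuine difficulty to lie entirely in the inner-boundary analysis --- solvability and up-to-the-boundary regularity of the mixed problem in the presence of the $N_l$, and the careful bookkeeping that extracts the sharp constants $c_l$ with the correct sign when the mean curvature of $N_l$ is not assumed to be signed --- whereas the interior trace estimate and the final Cauchy--Schwarz on $\S$ are routine once the test function is in hand.
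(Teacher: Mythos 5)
Your proposal follows the paper's own proof nearly step for step: the same Dirichlet problem with constant data on the inner boundaries, the same use of \eqref{equ10.5 nc} combined with the pointwise trace (Cauchy--Schwarz) inequality and the discarding of the nonnegative $Q$-term, the same flux identity from integration by parts, and the same final H\"older inequality on $\S$ using mean convexity, with rigidity read off from the Hessian identity restricted to $\S$. There are, however, two genuine weak points.

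First, existence of the solution is not a Schauder-regularity issue but an invertibility issue. The coefficient $q=\bd V/V$ has no sign in general (on the hemisphere with $V=\cos r$ one has $q=-n<0$), so $\bd-q$ can have a nontrivial kernel: for $\O$ equal to the open hemisphere, $\cos r$ itself lies in the Dirichlet kernel of $\bd+n$ and the problem $\bd f+nf=1$, $f|_{\p\O}=0$ is \emph{not} solvable. The paper resolves this with Lemma \ref{lemm first}: rewriting the quadratic form as $\int_\O\left|\bn f-\frac{\bn V}{V}f\right|^2$ gives $\l_1(\bd-q,\O)\ge 0$, with strict positivity whenever $\O\neq\mathring{M}$, and Fredholm theory then yields existence and uniqueness; the borderline case $\O=\mathring{M}$ is treated separately by approximation. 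Your proposal is silent on this mechanism, and without it the very first step can fail.

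Second, the claimed collapse of the $N_l$-integrand into the ``perfect square'' $VH\left(u-\frac{z_lV_{,\nu}}{V}\right)^2$ is not a legitimate evaluation: on $N_l$ one has $V=0$ and $H=0$ (the $N_l$ are minimal), so this expression is of the indeterminate form $0\cdot 0\cdot\infty$, and a literal reading (plugging in $H=0$) would make the inner contributions vanish, yielding $n\int_\O V\le (n-1)\int_\S \frac{V}{H}$ with no $c_l$-terms --- which is false precisely in the horizon situations the theorem is designed for. The correct evaluation of the surviving term is $z_l^2\, V_{,\nu}\,\frac{(\bd V\bar g-\bn^2 V)(\nu,\nu)}{V}$, finite by the standing hypothesis that $\bn^2V/V$ extends continuously to $\p\O$, and \emph{strictly negative}, since $V_{,\nu}<0$ on $N_l$ (as $V=0$ there and $V>0$ inside) while the ratio is positive by condition (iii) of Definition \ref{homo}. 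It is exactly this negative term that the choice of constant absorbs: demanding $z_l^2\,\frac{(\bd V\bar g-\bn^2 V)(\nu,\nu)}{V}\le -\frac{n-1}{n}z_l$ pointwise on $N_l$, with $z_l<0$, is what forces $z_l=c_l$ to be the stated maximum. Your later sentence about the ``pointwise optimization'' indicates you intend this computation, so the architecture of your argument survives, but as written the sign of the inner contribution is misrepresented and the step would not withstand a literal reading.
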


For relation between the constant $c_l$,  the scalar curvature ${\rm R}$ of $(M, \bar g)$ and the scalar curvature ${\rm R}_{N_l}$ of $N_l$, see Remark \ref{rema homo} (4). By assuming a {\em locally warped product structure} around the horizon(s), c.f. Definition \ref{local warped}, we have the following results similar to the work in \cite{Br} but with possibly finitely many horizons. 

    
    \begin{coro}
        \label{coro local}
        Let $(M^n, \bar g, V)$ and $\O$ be as Theorem \ref{HK theo}. We further assume $M^n$ has local warped product structure  $\bar g=dr^2+\lambda_l^2(r)g_{N_l}, r\in [0,\e_l)$ for a small $\e_l>0$ around its inner boundary component $N_l$. 
 Assume near the boundary $N_l$, $V=\l'_l(r)$ and $\l''_l(0)\neq 0$. 
 
Then the following inequality holds:
     \begin{equation}
         \di n\int_{\O}V d\O+\sum^\tau_{l=1}\lambda_l(0)^n{\rm Vol}(N_l, g_{N_l}) \le\di (n-1)\int_{\S}\frac{V}{H} dA,
         \label{Heintze-Karcher local}
     \end{equation}
     where ${\rm Vol}(N_l, g_{N_l})$ is the volume of the boundary component with respect to its metric $g_{N_l}$. Moreover, if equality in \eqref{Heintze-Karcher local} holds, then  the boundary $\S$ is umbilical.
    \end{coro}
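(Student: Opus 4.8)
The plan is to deduce \eqref{Heintze-Karcher local} directly from the conclusion \eqref{Heintze-Karcher} of Theorem \ref{HK theo}, since the latter already reads
\[
n\Big(\int_\O V\,d\O + \sum_{l=1}^\tau c_l \int_{N_l}V_{,\nu}\,dA\Big)\le (n-1)\int_\S \frac{V}{H}\,dA,
\]
while the target merely replaces $n\sum_l c_l\int_{N_l}V_{,\nu}\,dA$ by $\sum_l \lambda_l(0)^n\,{\rm Vol}(N_l, g_{N_l})$. Thus it suffices to prove, under the local warped product hypothesis, the single identity
\[
n\,c_l\int_{N_l}V_{,\nu}\,dA = \lambda_l(0)^n\,{\rm Vol}(N_l, g_{N_l})
\]
for each inner component $N_l$; the umbilicity characterization of the equality case then transfers verbatim from Theorem \ref{HK theo}.

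First I would compute the relevant derivatives of the radial potential $V=\lambda_l'(r)$ in the metric $\bar g = dr^2 + \lambda_l^2(r)g_{N_l}$. Writing $\lambda=\lambda_l$ and using the standard warped-product Christoffel symbols, a radial function satisfies $\bn^2 V = V''\,dr^2 + \lambda\lambda' V'\,g_{N_l}$ and $\bd V = V'' + (n-1)\tfrac{\lambda'}{\lambda}V'$. With the outward normal $\nu=-\p_r$ (so that $r>0$ points into $\O$) one gets $\bn^2_{\nu\nu}V = V''$ and
\[
\bd V\,\bar g_{\nu\nu} - \bn^2_{\nu\nu}V = (n-1)\frac{\lambda'}{\lambda}V'.
\]
Substituting $V=\lambda'$ and $V'=\lambda''$ and dividing into $V=\lambda'$, the crucial cancellation of the factor $\lambda'$ takes place:
\[
\frac{V}{\bd V\,\bar g_{\nu\nu}-\bn^2_{\nu\nu}V} = \frac{\lambda'}{(n-1)\frac{\lambda'}{\lambda}\lambda''} = \frac{\lambda}{(n-1)\lambda''}.
\]
This expression depends only on $r$, hence is constant along each level set $N_l$, so the maximum defining $c_l$ is attained trivially and
\[
c_l = -\frac{1}{n}\frac{\lambda_l(0)}{\lambda_l''(0)}.
\]

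Finally I would evaluate the boundary integral. Since $\nu=-\p_r$ and $V=\lambda'$, we have $V_{,\nu}=-\lambda''$, equal to the constant $-\lambda_l''(0)$ on $N_l$, while the induced area element at $r=0$ is $dA=\lambda_l(0)^{n-1}\,dV_{g_{N_l}}$; hence $\int_{N_l}V_{,\nu}\,dA = -\lambda_l''(0)\,\lambda_l(0)^{n-1}\,{\rm Vol}(N_l,g_{N_l})$. Multiplying by $n\,c_l$, the factors $\lambda_l''(0)$ cancel and one obtains precisely $\lambda_l(0)^n\,{\rm Vol}(N_l,g_{N_l})$, which substituted into \eqref{Heintze-Karcher} yields \eqref{Heintze-Karcher local}. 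The one delicate point I anticipate is that on a horizon one expects the inner boundary condition to force $V=\lambda_l'(0)=0$, so that both $V$ and the denominator $\bd V\,\bar g_{\nu\nu}-\bn^2_{\nu\nu}V$ vanish at $r=0$; the hypothesis $\lambda_l''(0)\neq 0$ is exactly what guarantees that their ratio has the finite value $\tfrac{\lambda_l(0)}{(n-1)\lambda_l''(0)}$, making $c_l$ well-defined, and the sign requirement $c_l<0$ of Theorem \ref{HK theo} is then consistent since $V>0$ in $\O$ with $\lambda_l'(0)=0$ forces $\lambda_l''(0)>0$. Resolving this apparent $0/0$ together with pinning down the correct orientation of $\nu$ is the only step needing care; the remainder is a direct substitution.
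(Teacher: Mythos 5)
Your proposal is correct and takes essentially the same route as the paper: the paper's proof of Corollary \ref{coro local} likewise reduces everything to Theorem \ref{HK theo} by checking the single identity $c_l\int_{N_l}V_{,\nu_{N_l}}\,dA=\tfrac1n\lambda_l(0)^n{\rm Vol}(N_l,g_{N_l})$, using $c_l=-\tfrac{1}{n}\,\lambda_l(0)/\lambda_l''(0)$, $V_{,\nu_{N_l}}=-\lambda_l''(0)$ and $dA=\lambda_l(0)^{n-1}\,d{\rm Vol}_{g_{N_l}}$. The only cosmetic difference is that you rederive the warped-product Hessian formulas and the constancy of the quotient on each horizon, which the paper had already recorded in Section \ref{sec 2.2} (see \eqref{hessV}, \eqref{RIC} and Remark \ref{rema homo}).
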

\

As in the works of Ros \cite{Ros} and Brendle \cite{Br}, the Heintze-Karcher inequality together with a Minkowski identity (or inequality) immediately imply an Alexandrov type rigidity theorem. The following theorem is due to Brendle \cite{Br}. The only difference is when the base manifold $N$ satisfies ${\rm Ric}_N=(n-2)\rho g_N$, we drop conditions (H4) and (H4') to show umbilic hypersurfaces must be slice. 

\begin{theo}
    \label{Alex theo}
    {\bf (Brendle \cite{Br})} Let $M^n=[0,\bar r)\times N^{n-1}$ be a  sub-static warped product manifold with metric $\bar g=dr^2+\l(r)^2g_N$, where $N$ is a closed manifold with ${\rm Ric}_N=(n-2)\rho g_N$ for some $\rho\in \mathbb{R}$, and the potential function $V=\l'(r)$ satisfies \ref{cond 1} or \ref{cond 2} (see Section \ref{sec 2.2}). 
 
Let $\S\subset M$ be a closed orientable embedded hypersurface  with constant mean curvature. Then $\S$ is either a geodesic slice of an event horizon or it is a geodesic sphere lying  in the region with constant sectional curvature.
\end{theo}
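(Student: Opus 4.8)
The strategy is the one pioneered by Ros and refined by Brendle: combine the Heintze--Karcher inequality with a Minkowski identity to force equality, conclude umbilicity, and then classify closed umbilical hypersurfaces in the warped product. Since $\S$ is closed, orientable and embedded and $M=[0,\bar r)\times N$ fibers over an interval, $\S$ encloses a compact domain $\O$ whose boundary is $\S$ together with (possibly several) horizon slices $\{r=0\}$; these satisfy the inner boundary condition, and after orienting $\O$ so that $\S$ is the outermost component the hypothesis $V=\l'(r)>0$ for $r>0$ makes $\S$ mean convex. Thus Corollary \ref{coro local} applies (when $\O$ meets no horizon, Theorem \ref{HK theo} via \eqref{Heintze-Karcher null} applies instead). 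Since $H$ is constant, the right-hand side of \eqref{Heintze-Karcher local} equals $\frac{n-1}{H}\int_\S V\,dA$.

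The second ingredient is the Minkowski identity produced by the conformal field $X=\l(r)\p_r$. A direct computation in the warped metric gives $\bn_W X=VW$ for every $W$, i.e. $\bn X=V\bar g$, so $X$ is conformal with $\div X=nV$. First I would integrate the tangential divergence of $X$ over the closed hypersurface $\S$: from $\div_\S X=(n-1)V$ and the splitting $\div_\S X=\div_\S X^T+H\<X,\nu\>$, integration together with $\int_\S\div_\S X^T\,dA=0$ yields $(n-1)\int_\S V\,dA=H\int_\S\<X,\nu\>\,dA$ because $H$ is constant. Next, the divergence theorem for $X$ on $\O$ gives $\int_\S\<X,\nu\>\,dA+\sum_l\int_{N_l}\<X,\nu_l\>\,dA=n\int_\O V\,d\O$; on a horizon slice $\<X,\nu_l\>=-\l(0)$ while the ambient area is $\l(0)^{n-1}{\rm Vol}(N_l,g_{N_l})$, so the inner terms contribute exactly $-\sum_l\l(0)^n{\rm Vol}(N_l,g_{N_l})$. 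Combining the two identities produces $\frac{n-1}{H}\int_\S V\,dA=n\int_\O V\,d\O+\sum_l\l(0)^n{\rm Vol}(N_l,g_{N_l})$, which is precisely the left-hand side of \eqref{Heintze-Karcher local}.

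Comparing this equality with the inequality \eqref{Heintze-Karcher local} shows that equality is attained, so the rigidity clause of Corollary \ref{coro local} forces $\S$ to be totally umbilical. Because $H$ is constant, the umbilicity factor $\phi=\frac{H}{n-1}$ is a constant and $h_{ij}=\phi\,\bar g_{ij}$ with $\n\phi\equiv0$.

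The last and hardest step is the classification of closed umbilical hypersurfaces, and it is exactly here that the Einstein condition ${\rm Ric}_N=(n-2)\rho\,g_N$ replaces Brendle's hypotheses (H4) and (H4'). From the Codazzi equation together with $h_{ij}=\phi\,\bar g_{ij}$ and $\n\phi\equiv0$ one obtains $\bar R(e_i,e_j,e_k,\nu)=0$ for all vectors tangent to $\S$. Substituting the explicit curvature tensor of $\bar g=dr^2+\l^2g_N$ and using ${\rm Ric}_N=(n-2)\rho\,g_N$, this vanishing splits into two alternatives. Either the normal $\nu$ is everywhere proportional to $\p_r$, so that $r|_\S$ is constant and $\S$ is a geodesic slice --- an event horizon when $V=0$ there; or $\nu$ has a nonvanishing component tangent to $N$, which forces the radial sectional curvature $-\l''/\l$ and the spherical sectional curvatures $(\rho-\l'^2)/\l^2$ to coincide on an open region, i.e. the ambient metric has constant sectional curvature there, and $\S$ is an umbilical hypersurface in a space-form region, hence a geodesic sphere. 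I expect the \emph{main obstacle} to be this dichotomy: one must show that in the non-slice case the Einstein hypothesis on $N$ upgrades the pointwise curvature information supplied by Codazzi to genuine constant sectional curvature of an open set, while simultaneously excluding the intermediate configurations that Brendle ruled out by hand through (H4)/(H4'). The remaining technical points --- the separation and orientation of $\S$, the precise matching of the horizon boundary terms in the two integral identities, and the sign of $H$ needed for mean convexity --- are routine once this classification is in place.
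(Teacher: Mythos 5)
Your strategy coincides with the paper's: the Ros-type argument combining the Heintze--Karcher inequality \eqref{Heintze-Karcher local} (or \eqref{Heintze-Karcher null}) with the Minkowski identity generated by $X=\l(r)\p_r$, forcing equality and hence umbilicity; your horizon bookkeeping $\<X,\nu_l\>=-\l(0)$ and the matching of boundary terms are exactly what the paper uses (compare \eqref{Minkowski identity} and the proof of Corollary \ref{coro local}). One correction in this half: mean convexity of $\S$ does \emph{not} follow from $V>0$ alone; it follows from your own two identities, since $H\int_\S\<X,\nu\>\,dA=(n-1)\int_\S V\,dA>0$ while $\int_\S\<X,\nu\>\,dA=n\int_\O V\,d\O+\sum_l\l(0)^n{\rm Vol}(N_l,g_{N_l})>0$, hence $H>0$. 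Record this: the positivity of $c=H/(n-1)$ is needed again below.

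The genuine gap is the classification step, which you flag as the ``main obstacle'' but do not carry out, and whose dichotomy is misstated. Tracing Codazzi (with $h=c\,g$, $c$ constant) gives only that $\nu$ is a \emph{pointwise} eigenvector of ${\rm Ric}=-\Psi(r)\bar g-\Phi(r)\,dr\otimes dr$, where $\Phi=(n-2)\left(\frac{\l''}{\l}+\frac{\rho-\l'^2}{\l^2}\right)$ is the gap between the radial and fiber eigenvalues. Where $\Phi\neq 0$ the eigenspaces are $\RR\p_r$ and the tangent space to $N$, so the pointwise alternative is $\nu\parallel\p_r$ \emph{or} $\nu\perp\p_r$; in particular your claim that a nonvanishing $N$-component of $\nu$ forces the sectional curvatures to coincide is false when $\nu$ is entirely tangent to $N$, for then $\nu$ is a genuine eigenvector no matter what $\Phi$ is. The paper closes this in three steps. (i) On $S:=\{x\in\S:\Phi(r(x))\neq 0\}$ the case $\nu\perp\p_r$ is impossible: $\p_r$ would then be tangent to $\S$, and since radial curves are geodesics of the warped metric, umbilicity gives $c=h(\p_r,\p_r)=0$, contradicting $c>0$. (ii) Hence $\nu\parallel\p_r$ on $S$, so $r$, and with it $\Phi(r(\cdot))$, is locally constant on $S$; thus $S$ is closed as well as open, and connectedness of $\S$ forces $S=\S$ (then $r$ is constant and $\S$ is a slice) or $S=\emptyset$. (iii) If $S=\emptyset$, then $\Phi\equiv 0$ along $\S$; by \eqref{Riemancurvature} this says each such point is isotropic, and Schur's theorem upgrades this to constant sectional curvature of the open region swept out by the slices through $\S$, where the umbilical $\S$ is then a geodesic sphere. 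Without (i)--(iii) --- the exclusion of the orthogonal case, the open--closed globalization, and the Schur argument --- the theorem is not proved.
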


Several remarks about Theorem \ref{HK theo}--\ref{Alex theo} are given below.
\begin{rema}
\
\begin{itemize}  
\item[(i)] Theorem \ref{HK theo} works for domains in general sub-static Riemannian manifolds and allows more than two boundary components. 
    
\item[(ii)] Our proof is completely different from \cite{Br} and avoids the subtle lemmata and Greene-Wu's approximation technique.

\item[(iii)] 

In Theorem \ref{Alex theo}, by adding ${\rm Ric}_N=(n-2)\rho g_N$, we remove conditions $(H4)$ and $(H4')$ from \cite{Br}. 


\end{itemize} 
\end{rema}

\

In the past several decades, we have witnessed much successes of applying geometric flows to prove geometric inequalities and there have been numerous references which we will not list here. As an immediately related example, Brendle's proof on the Heintz-Karcher type inequality \cite{Br} is based on a parabolic approach. The key of a successful parabolic approach is finding monotonic quantities along the flow. When such a property of monotonicity  is not available, we may have to seek other ways. For instance, the following classical Minkowski inequality for convex domains in $\mathbb R^n$, 
\begin{align} 
\left(\int_\S dA\right)^2\geq \frac{n}{n-1}\int_\Omega d\O \int_\S H dA,
    \label{Mink Euclidean}
\end{align}
describes a sharp relation among three geometric quantities, the volume of a bounded convex domain, the surface area of the smooth boundary, and the total mean curvature over the boundary. To our knowledge, there is no geometric flow approach for this type of inequalities. It seems unlikely to find a monotonic quantity involving three quantities. 

By an elliptic method together with \eqref{equ10.5 nc}, we can generalize Minkowski type inequality (\ref{Mink Euclidean}) to sub-static Riemannian triples. For simplicity, we only state the results for domains with one or two boundary components. Domains with more boundary components can be similarly considered.
 
\begin{theo}\label{rm22}     Let $(M^n, \bar g, V)$ be an $n$-dimensional sub-static Riemannian triple. Let $\O\subset M$ be a bounded domain  with a connected boundary $\p\O=\S$ on which $V>0$.
We further assume the second fundamental form of $\S$ satisfies \begin{eqnarray}\label{ass}
h_{\a\b}- \frac{V_{,\nu}}{V} g_{\a\b}\geq 0.
\end{eqnarray}

Then the following inequality holds:
\begin{eqnarray}\label{rm2}
\left(\int_\S VdA\right)^2\geq \frac{n}{n-1}\int_\Omega Vd\O \int_\S HV dA.
\end{eqnarray}
Moreover, if there is one point in $\S$ at which the strict inequality holds in \eqref{ass}, and equality in \eqref{rm2} holds, then $\S$ is umbilical and of constant mean curvature.
\end{theo}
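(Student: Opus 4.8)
The plan is to feed the solution of a carefully chosen Neumann problem into the integral identity (\ref{equ10.5 nc}) and to route the two hypotheses into the two ``non-negative reservoirs'' of the formula: the sub-static condition $Q\ge 0$ controls the interior term, while the static-convexity assumption (\ref{ass}) is reserved for the boundary. Concretely, introduce the operator $\mathcal{L}f:=\bd f-\frac{\bd V}{V}f$, which obeys the divergence identity $V\mathcal{L}f=\div(V\bn f-f\bn V)$, and solve the Neumann problem $\mathcal{L}f=c$ in $\O$ with $\bn_\nu f=1$ on $\S$. Here the constant $c$ is not free: integrating the divergence identity over $\O$ forces the solvability relation $c\int_\O V\,d\O=\int_\S V\,dA-\int_\S zV_{,\nu}\,dA$, with $z=f|_\S$ and $u=\bn_\nu f=1$. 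Existence of $f$ follows from standard elliptic (Fredholm) theory once this compatibility condition is imposed.

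Next I would substitute $f$ into (\ref{equ10.5 nc}). On the left-hand side the integrand has the form $V\big[(\tr T)^2-|T|^2\big]$ with $T=\bn^2 f-\frac{\bn^2 V}{V}f$ and $\tr T=\bd f-\frac{\bd V}{V}f=c$; the elementary trace inequality $|T|^2\ge\frac1n(\tr T)^2$ then yields $(\tr T)^2-|T|^2\le\frac{n-1}{n}c^2$, so the left-hand side is at most $\frac{n-1}{n}c^2\int_\O V\,d\O$. Since $(M,\bar g,V)$ is sub-static, $Q\ge0$ makes the interior term $\int_\O V\cdot Q(\bn f-\frac{\bn V}{V}f,\bn f-\frac{\bn V}{V}f)\,d\O$ non-negative. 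Dropping it, the boundary integral $B$ on the right of (\ref{equ10.5 nc}) satisfies $B\le\frac{n-1}{n}c^2\int_\O V\,d\O$.

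The crux, and the step I expect to be the main obstacle, is to show $B\ge\int_\S HV\,dA$. Setting $u=1$, the term $\int_\S 2V\De z\,dA$ integrates by parts against $\int_\S 2z\De V\,dA$ and cancels on the closed surface $\S$; the mixed term is rewritten through the Codazzi-type commutation identity $\bn^2 V(\n z,\nu)=\<\n z,\n V_{,\nu}\>-h(\n z,\n V)$ and then integrated by parts on $\S$. After this bookkeeping the boundary integrand should reorganize into the ``good'' term $VH$, the manifestly non-negative static-convexity term $V\big(h-\frac{V_{,\nu}}{V}\bar g\big)(\n z,\n z)\ge 0$, and a remainder that must reconcile exactly with the $\int_\S zV_{,\nu}\,dA$ appearing in the solvability relation. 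Getting every sign right in this regrouping—in particular controlling the remainder so that $(c\int_\O V)^2\le(\int_\S V)^2$, equivalently $0\le\int_\S zV_{,\nu}\,dA\le 2\int_\S V\,dA$—is the delicate part, since (\ref{ass}) enters only through $V\,h(\n z,\n z)\ge V_{,\nu}|\n z|^2$ and the rest is pure integration by parts.

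Granting the boundary bound, the two chains combine: $\int_\S HV\,dA\le B\le\frac{n-1}{n}c^2\int_\O V\,d\O=\frac{n-1}{n}\frac{(c\int_\O V)^2}{\int_\O V\,d\O}\le\frac{n-1}{n}\frac{(\int_\S V\,dA)^2}{\int_\O V\,d\O}$, which rearranges to (\ref{rm2}). For the rigidity statement, equality forces the trace inequality to be saturated, so $T=\bn^2 f-\frac{\bn^2 V}{V}f=\frac{c}{n}\bar g$ throughout $\O$, the $Q$-term vanishes, and the non-negative boundary pieces vanish. If (\ref{ass}) is strict at some point of $\S$, the vanishing of $V\big(h-\frac{V_{,\nu}}{V}\bar g\big)(\n z,\n z)$ there gives $\n z=0$ at that point; restricting the pointwise Hessian identity $\bn^2 f=\frac{c}{n}\bar g+\frac{f}{V}\bn^2 V$ to $\S$ and using $u=1$ converts it into a relation expressing $h$ in terms of $\n^2_\S z$ and $\frac{z}{V}\bn^2 V$, from which one propagates $\n z\equiv 0$ and concludes that $\S$ is umbilical with constant mean curvature.
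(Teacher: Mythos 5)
Your overall architecture (solve an elliptic problem, insert the solution into \eqref{equ10.5 nc}, use Cauchy--Schwarz on the trace term, spend $Q\ge 0$ on the interior and \eqref{ass} on the boundary) is the same as the paper's, but your choice of boundary condition $\bn_\nu f=1$ breaks the proof at exactly the step you flag as the main obstacle, and that obstacle is not surmountable as posed. Two concrete problems. First, with $u=1$ the boundary terms of \eqref{equ10.5 nc} do \emph{not} reorganize into $\int_\S HV$ plus the nonnegative term $V\bigl(h-\frac{V_{,\nu}}{V}g\bigr)(\n z,\n z)$ plus a controllable remainder: after the cancellation $\int_\S V\De z=\int_\S z\De V$ and the substitution $\bn^2V(\n z,\nu)=\<\n z,\n V_{,\nu}\>-h(\n z,\n V)$, one is left with cross terms in $z$, $\n z$, $\n V$, $V_{,\nu}$, $H$ carrying no sign. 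The paper's Corollary \ref{coro rf} identifies the correct completion of the square: the boundary integrand is a quadratic expression in the $V$-covariant quantities $\n z-\frac{\n V}{V}z$ and $u-\frac{V_{,\nu}}{V}z$, so \eqref{ass} must be applied with argument $\n z-\frac{\n V}{V}z$, not $\n z$. Second, even granting your bound $B\ge\int_\S HV$, your chain needs $\bigl(c\int_\O V\bigr)^2\le\bigl(\int_\S V\bigr)^2$, i.e. $0\le\int_\S zV_{,\nu}\,dA\le 2\int_\S V\,dA$, and nothing in the hypotheses controls the sign or size of $\int_\S zV_{,\nu}$. There is also a structural issue with your elliptic problem: since $V\bigl(\bd f-\frac{\bd V}{V}f\bigr)=\div\bigl(V^2\bn(f/V)\bigr)$, the natural conormal datum is $Vf_{,\nu}-V_{,\nu}f$, not $f_{,\nu}$; in the variable $w=f/V$ your condition $f_{,\nu}=1$ is a Robin condition $V^2w_{,\nu}+VV_{,\nu}w=V$ with indefinite coefficient, and your ``solvability relation'' involves the unknown $z$, so it does not determine $c$ at all.

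The fix is precisely the paper's boundary condition: solve $\bd f-\frac{\bd V}{V}f=1$ in $\O$ with $Vf_{,\nu}-V_{,\nu}f=cV$ on $\S$. Then compatibility cleanly gives $c=\int_\O V/\int_\S V$, existence follows from the Fredholm alternative for $\div(V^2\bn w)=V$, $V^2\bn_\nu w=cV$, and in Corollary \ref{coro rf} the normal combination $u-\frac{V_{,\nu}}{V}z\equiv c$ is constant, so the cross term $2c\int_\S\bigl(V\De z-z\De V\bigr)$ vanishes by parts, \eqref{ass} kills the tangential quadratic form, and one reads off $\frac{n-1}{n}\int_\O V\ge c^2\int_\S HV$, which is \eqref{rm2}; no estimate on $\int_\S zV_{,\nu}$ is ever needed. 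The equality case should likewise run through the vanishing of $\bigl(h-\frac{V_{,\nu}}{V}g\bigr)\bigl(\n\frac{z}{V},\n\frac{z}{V}\bigr)$ on the open set where \eqref{ass} is strict, giving $z=\a V$ there, followed by an open--closed connectedness argument, rather than through $\n z=0$ at a single point, which by itself propagates nothing.
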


\begin{theo}\label{rm33} Let $M^n=[0,\bar r)\times N^{n-1}$ be a sub-static warped product manifold with metric $\bar g=dr^2+\l(r)^2g_N$, where $N$ is an $(n-1)$-dimensional closed manifold and the potential function $V=\l'(r)$ satisfies  \ref{cond 2} (see Section \ref{sec 2.2}).
 Assume $\S$ is homologous to $\p M$, namely, there is a bounded domain $\O$ such that $\p\O=\S\cup \p M$. We further assume the second fundamental form of $\S$ satisfies \eqref{ass}. 

Then the following inequality holds:
     \begin{eqnarray}\label{case2Minkowski}
        && \left(\int_\S VdA\right)^2\geq \frac{n}{n-1}\left(\int_\Omega Vd\O+\frac1n\l(0)^n{\rm Vol}(N, g_N)\right) \int_\S HV dA.
     \end{eqnarray}
 Moreover,  if there is one point of $\S$ at which the strict inequality holds in \eqref{ass}, then equality in \eqref{case2Minkowski} holds if and only if  $\S$ is a slice $N\times \{r\}$ for some $r\in(0, \bar r)$.
\end{theo}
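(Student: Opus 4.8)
The plan is to run the same elliptic scheme that underlies the connected-boundary case (Theorem \ref{rm22}), now on the two-component domain $\O$ whose inner boundary is the horizon $\p M$, and to feed the extra volume term $\frac1n\l(0)^n{\rm Vol}(N,g_N)$ into the argument through the warped-product structure. First I would record the conformal-Killing identity responsible for that term: setting $\Phi(r)=\int_0^r\l(s)\,ds$, the radial field $X=\l(r)\p_r=\bn\Phi$ satisfies $\bn^2\Phi=\l'\bar g=V\bar g$, so $\div X=\bd\Phi=nV$. Applying the divergence theorem on $\O$, with outward normal $-\p_r$ on the horizon and using $V|_{\p M}=\l'(0)=0$ together with ${\rm Area}(\p M)=\l(0)^{n-1}{\rm Vol}(N,g_N)$, gives $n\int_\O V\,d\O+\l(0)^n{\rm Vol}(N,g_N)=\int_\S\langle X,\nu\rangle\,dA$. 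Hence the bracket on the right of \eqref{case2Minkowski} equals $\frac1n\int_\S\langle X,\nu\rangle\,dA$, and the whole inequality is reduced to a statement supported entirely on the outer boundary $\S$.

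Next I would solve the weighted equation $\bd f-\frac{\bd V}{V}f=c$ in $\O$, with a Neumann-type condition on $\S$ and the natural condition on the degenerate horizon $\p M$, the constant $c$ being fixed by the solvability (compatibility) relation so that it is proportional to $\int_\S V\,dA$ divided by the volume bracket above. Substituting $f$ into the master identity \eqref{equ10.5 nc}, the interior left-hand side is controlled by the pointwise trace inequality $(\tr T)^2\le n|T|^2$ applied to $T=\bn^2 f-\frac{\bn^2 V}{V}f$, whose trace is the constant $\bd f-\frac{\bd V}{V}f=c$; this yields $\int_\O V[(\bd f-\frac{\bd V}{V}f)^2-|\bn^2 f-\frac{\bn^2 V}{V}f|^2]\,d\O\le\frac{n-1}{n}c^2\int_\O V\,d\O$. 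Because $(M,\bar g,V)$ is sub-static, the tensor $Q$ of \eqref{def Q} is nonnegative (Definition \ref{defi static}), so the last (interior) term on the right of \eqref{equ10.5 nc} may be discarded with the correct sign, leaving only the two boundary integrals to analyse.

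The boundary analysis splits. On $\S$ I would integrate the tangential terms by parts, using $\int_\S\Delta z\,dA=0$ on the closed hypersurface, and invoke the static-convexity hypothesis \eqref{ass}, $h_{\a\b}-\frac{V_{,\nu}}{V}g_{\a\b}\ge0$, to bound the resulting $\S$-integral below by a multiple of $\int_\S HV\,dA$ (this is exactly the mechanism that replaces the Euclidean term $\int_\S h(\n z,\n z)\ge 0$). On the horizon $\p M$ the decisive simplifications are $V=0$, $H=0$, $V_{,\nu}=-\l''(0)$, together with the continuity of $\bn^2 V/V$ up to $\p M$; most summands in \eqref{equ10.5 nc} vanish, and the survivors are evaluated from the warped identities, in particular $\frac{\bn^2 V-\bd V\bar g}{V}(\p_r,\p_r)\big|_{r=0}=-(n-1)\frac{\l''(0)}{\l(0)}$. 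Assembling the two boundary contributions, inserting the conformal-Killing identity from the first step, and closing with a Cauchy--Schwarz step then produces \eqref{case2Minkowski} with the sharp constant.

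The main obstacle is the degenerate inner boundary. Unlike the connected case of Theorem \ref{rm22}, the zeroth-order coefficient $\bd V/V$ of the elliptic operator is singular along $\p M$, so I must first guarantee existence and sufficient boundary regularity of $f$ up to the horizon---this is where condition \ref{cond 2} and the standing hypothesis that $\bn^2 V/V$ extends continuously to $\p\O$ are essential---and then show that the surviving horizon boundary terms recombine into precisely $\frac1n\l(0)^n{\rm Vol}(N,g_N)$ with the right sign, rather than an uncontrolled remainder. For the rigidity statement, equality forces the trace inequality to be saturated, i.e. $\bn^2 f-\frac{\bn^2 V}{V}f=\frac{c}{n}\bar g$, forces $Q(\bn f-\frac{\bn V}{V}f,\cdot)=0$, and---given a point of strict inequality in \eqref{ass}---forces $\S$ to be totally umbilic; a warped-product rigidity argument (umbilicity together with the saturated Hessian equation) then identifies $\S$ with a slice $N\times\{r\}$, completing the characterization of equality.
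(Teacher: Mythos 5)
Your overall scheme coincides with the paper's: solve a weighted Neumann-type problem on the two-boundary domain, insert the solution into the integral formula (the paper uses the rearranged form \eqref{equ10.5 Minkowski} of Corollary \ref{coro rf}), drop the interior term by sub-staticity, use \eqref{ass} on $\S$, and close with Cauchy--Schwarz. However, the point you defer as ``the main obstacle''---showing the horizon terms recombine into exactly $\frac1n\l(0)^n{\rm Vol}(N,g_N)$---is not a technicality to be checked at the end; it is the crux of the theorem, and it is resolved only by a specific, quantitative choice of horizon datum that your proposal never makes. In the paper's problem \eqref{Neumann2.1} the condition on $\p M$ is $Vf_{,\nu_{\p M}}-fV_{,\nu_{\p M}}=c_0V_{,\nu_{\p M}}$ with the particular constant $c_0=-\frac1n\frac{\l(0)}{\l''(0)}$. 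Since $V=0$ and $V_{,\nu_{\p M}}=-\l''(0)\neq0$ on the horizon, this pins the boundary value $z$ to the constant $c_0$ there; consequently the only surviving horizon term in \eqref{equ10.5 Minkowski} is $z^2\frac{\bd V\bar g-\bn^2 V}{V}(\bn V,\nu)$, which by \eqref{hessV} evaluates to precisely $-\frac{n-1}{n^2}\l(0)^n{\rm Vol}(N,g_N)$, and the Fredholm compatibility condition then forces $c_1=\frac{\int_\O V\,d\O+\frac1n\l(0)^n{\rm Vol}(N,g_N)}{\int_\S V\,dA}$ on $\S$. This is how the volume bracket enters with the right constant and sign. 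Note that you cannot simply decree, as your second step does, that the compatibility constant is ``proportional to $\int_\S V$ divided by the volume bracket'': the compatibility constant is determined by the prescribed horizon flux, so the bracket appears only \emph{because} of the choice of $c_0$. In particular, the genuinely ``natural'' condition on the degenerate boundary---zero weighted flux $Vf_{,\nu}-fV_{,\nu}=0$, which forces $z=0$ on $\p M$---kills every horizon term and yields only the weaker inequality \eqref{rm2}, with $\int_\O V\,d\O$ alone in place of the bracket. As written, your argument therefore proves a strictly weaker statement. (Your opening divergence identity $n\int_\O V\,d\O+\l(0)^n{\rm Vol}(N,g_N)=\int_\S\<X,\nu\>\,dA$ is correct, but it cannot substitute for this choice; in the paper that identity enters only in the equality analysis, via the Minkowski identity \eqref{Minkowski identity}.)

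There is a second gap in the rigidity discussion. Umbilicity of $\S$ together with the saturated Hessian equation does not by itself identify $\S$ as a slice: umbilical hypersurfaces lying in a region of constant sectional curvature need not be slices. The paper's route is different: a point of strict inequality in \eqref{ass} plus equality in \eqref{case2Minkowski} forces, by the open-and-closed argument from the proof of Theorem \ref{rm22}, that $\S$ has \emph{constant mean curvature}; only then does Theorem \ref{Alex theo} (Brendle's CMC Alexandrov theorem in warped products) apply to conclude that $\S$ is a slice. Moreover, the converse direction---that every slice $N\times\{r\}$ actually attains equality in \eqref{case2Minkowski}---must be verified separately, and that is exactly where your conformal Killing computation belongs: on a slice, $H$ is constant and $\int_\S H\<X,\nu\>=(n-1)\int_\S V$ combine with the divergence identity to give equality.
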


Theorem \ref{rm22} recovers the result of the second named author in \cite{Xia} where the ambient spaces $M$ were restricted to space forms. We follows closely but simplifies the argument in \cite{Xia}. 
As observed in \cite{Xia}, the condition \eqref{ass} is satisfied when $\S$ is horoconvex in the hyperbolic space or $\S$ is convex in the hemi-sphere. 
This strong convexity is named as {\it static convex} by Brendle-Wang  \cite{BW} for its correspondence in static spacetime. 

\

In the end, we prove  a weighted Alexandrov-Fenchel inequality or a weighted almost Schur lemma for closed hypersurfaces in space forms.
We use $\HH^n$ to denote the hyperbolic space with curvature $-1$ and $\SS_+^n$ to denote the open hemi-sphere with curvature $1$.
\begin{theo}\label{almost Schur}
Let $\S$ be a smooth, closed hypersurface in  $\mathbb{H}^n$ ($\SS_+^n$ resp.). Let $V=\cosh r$ ($\cos r$ resp.), where $r(x)=dist(x,p)$ for some fixed point $p\in\HH^n$ ($p\in{\mathbb{S}_+^n}$ resp.).  In case of $\HH^n$ we assume that the second fundamental form of $\S$ satisfies \eqref{ass}, in particular, assume $\S$ is horo-convex. In case of $\SS^n_+$ we assume $\S$ is convex.

Then the following inequality holds: \begin{eqnarray}\label{AF1}
&&\int_\S V\left|H-\overline{H}^V\right|^2 dA\leq \frac{n-1}{n-2} \int_\S V\left|h_{\a\b}-\frac{H}{n-1}g_{\a\b} \right|^2 dA.
\end{eqnarray}
where $\overline{H}^V=\frac{\int_\S HV dA}{\int_\S V dA}$. Equivalently, 
\begin{eqnarray}\label{AF2}
&&\left(\int_\S HV dA\right)^2\geq \frac{2(n-1)}{n-2}\int_\S V dA \int_\S \sigma_2 (h) V dA,
\end{eqnarray}
where $\sigma_2(h)=\frac{1}{2}(H^2-|h|^2)$ is the second mean curvature of $\S$.
Equality holds in \eqref{AF1} and \eqref{AF2} if and only if $\O$ is a geodesic ball $B_R(q)$ for some point $q\in \HH^n$ ($q\in{\mathbb{S}_+^n}$ resp.).
\end{theo}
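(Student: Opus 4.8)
The plan is to prove the equivalent inequality \eqref{AF1}, a $V$-weighted hypersurface analogue of the De Lellis--Topping almost Schur lemma. The passage \eqref{AF1}$\Leftrightarrow$\eqref{AF2} is the pointwise identity $|h_{\a\b}-\tfrac{H}{n-1}g_{\a\b}|^2=\tfrac{n-2}{n-1}H^2-2\s_2(h)$ combined with $\ov H^V=\tfrac{\int_\S HV\,dA}{\int_\S V\,dA}$, which rewrites the left side of \eqref{AF1} as the $V$-weighted variance $\int_\S VH^2\,dA-(\int_\S VH\,dA)^2/\int_\S V\,dA$. The crucial structural fact is that here $V=\cosh r$ ($\cos r$) is exactly the static potential of the space form, so $\bn^2V=-\k V\bar g$ with $\k=-1$ ($\k=+1$), and $V>0$ on all of $\S$. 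First I would solve on the closed hypersurface $\S$ the weighted Poisson equation $\div_\S(V\n_\S z)=V(H-\ov H^V)$; since $V>0$ this operator is uniformly elliptic and self-adjoint in the measure $V\,dA$, and the compatibility $\int_\S V(H-\ov H^V)\,dA=0$ holds by definition of $\ov H^V$, so $z\in C^\infty(\S)$ exists.

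Multiplying by $H-\ov H^V$ and integrating gives $\int_\S V(H-\ov H^V)^2\,dA=-\int_\S V\langle\n_\S H,\n_\S z\rangle\,dA$. The role of the contracted Bianchi identity is played by the contracted Codazzi equation, which in a space form reads $\n_\b H=\tfrac{n-1}{n-2}\n_\a\mathring h_{\a\b}$, where $\mathring h=h-\tfrac{H}{n-1}g$ is the trace-free second fundamental form. Substituting and integrating by parts once more yields
\[
\int_\S V(H-\ov H^V)^2\,dA=\frac{n-1}{n-2}\int_\S V\,\langle\mathring h,\mathring{\tilde T}\rangle\,dA,
\]
where $\tilde T_{\a\b}:=\n^2_{\S,\a\b}z+\tfrac12(\n_\a\!\log V\,\n_\b z+\n_\b\!\log V\,\n_\a z)$, only its trace-free part $\mathring{\tilde T}$ survives against $\mathring h$, and $\tr\tilde T=H-\ov H^V$ by the equation for $z$. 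By the weighted Cauchy--Schwarz inequality, \eqref{AF1} follows once I establish the weighted Bochner estimate $\int_\S V|\mathring{\tilde T}|^2\,dA\le\tfrac{n-2}{n-1}\int_\S V(H-\ov H^V)^2\,dA$, i.e. $\int_\S V(|\tilde T|^2-(H-\ov H^V)^2)\,dA\le0$.

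To prove this estimate I would feed the integrand into the intrinsic form of the master formula \eqref{equ10.5 nc} on the closed manifold $\S$ with potential $v=V|_\S$ (equivalently, the drift-Laplacian Bochner formula for $\div_\S(V\n_\S\cdot)$). After integrating $\int_\S V|\n^2_\S z|^2$ by parts and commuting derivatives, the left side reduces to $\mathcal C+\mathcal G$, where
\[
\mathcal C=\int_\S\big[-V\,\mathrm{Ric}_\S(\n_\S z,\n_\S z)+\n^2_\S V(\n_\S z,\n_\S z)-\De_\S V\,|\n_\S z|^2\big]\,dA,
\]
\[
\mathcal G=\tfrac12\int_\S\tfrac1V\big(|\n_\S V|^2|\n_\S z|^2-\langle\n_\S V,\n_\S z\rangle^2\big)\,dA\ge0 .
\]
I then substitute the Gauss equation $\mathrm{Ric}_\S=(n-2)\k g+Hh-h^2$ and the restricted-Hessian identities $\n^2_\S V=-\k Vg+V_{,\nu}h$, $\De_\S V=-(n-1)\k V+V_{,\nu}H$ (both consequences of $\bn^2V=-\k V\bar g$). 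The $\k V|\n_\S z|^2$ terms cancel identically, and in a principal frame $\mathcal C$ collapses to $\int_\S\sum_i(\n_\S z\!\cdot\! e_i)^2(\k_i-H)(V\k_i+V_{,\nu})\,dA\le0$: convexity gives $\k_i-H\le0$, while static-convexity \eqref{ass} together with star-shapedness (hence $V_{,\nu}\ge0$) under the horo-convex/convex hypotheses gives $V\k_i+V_{,\nu}\ge0$. The hard part will be the leftover term $\mathcal G\ge0$, which has the wrong sign and must be absorbed into the strictly negative $\mathcal C$. My intended route is to exploit the special structure of the space-form potential: the first integral $|\bn V|^2+\k V^2\equiv\mathrm{const}$, giving $|\n_\S V|^2=\mathrm{const}-\k V^2-V_{,\nu}^2$, together with the identity $\n_\a V_{,\nu}=h_{\a\b}(\n_\S V)_\b$ obtained by differentiating $V_{,\nu}=\langle\bn V,\nu\rangle$ along $\S$ and using $\bn^2V(\cdot,\nu)=0$. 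Integrating $\mathcal G$ by parts against these identities should convert it into curvature quantities that recombine with $\mathcal C$, with \eqref{ass} closing the estimate. This absorption is the delicate step and is exactly where the hypotheses specialize the argument to $\HH^n$ and $\SS^n_+$ rather than a general sub-static triple.

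Finally, for rigidity I would track the two inequalities. Equality in the weighted Cauchy--Schwarz forces $\mathring{\tilde T}=\mu\,\mathring h$ pointwise, while equality in the Bochner estimate forces $\mathcal C=0$ and the absorbed part of $\mathcal G$ to vanish, i.e. $(\k_i-H)(V\k_i+V_{,\nu})\equiv0$ in every principal direction carrying $\n_\S z$. Under the strict form of \eqref{ass} (horo-convexity in $\HH^n$, convexity in $\SS^n_+$) this is possible only if $\mathring h\equiv0$, so $\S$ is totally umbilic with constant mean curvature; by the classification of umbilic hypersurfaces in $\HH^n$ and $\SS^n_+$, $\S$ is a geodesic sphere and $\O=B_R(q)$, as claimed. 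Conversely, on any geodesic ball $\mathring h\equiv0$ and $H$ is constant, so both sides of \eqref{AF1} vanish and equality holds.
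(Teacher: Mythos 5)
Your overall architecture parallels the paper's: show that convexity/horo-convexity makes $(\S,g,V|_\S)$ a \emph{closed sub-static triple} via the Gauss-equation factorization $\De V g-\n^2V+VR^\S=(Vh-V_{,\nu}g)(Hg-h)$, solve a weighted Poisson equation on $\S$, integrate by parts through Codazzi, apply Cauchy--Schwarz, and close with a Reilly/Bochner-type estimate. The Codazzi step, the equivalence \eqref{AF1}$\Leftrightarrow$\eqref{AF2}, and the sub-static observation are all correct. But there is a genuine gap at the crux, and you flag it yourself: the ``weighted Bochner estimate'' $\int_\S V\bigl(|\mathring{\tilde T}|^2-\tfrac{n-2}{n-1}(H-\ov H^V)^2\bigr)dA\le 0$ is never proved. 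Your decomposition into $\mathcal C+\mathcal G$ is correct (I verified that $\int_\S V\bigl[(\mathrm{tr}\,\tilde T)^2-|\tilde T|^2\bigr]dA=\int_\S(\De V g-\n^2V+VR^\S)(\n z,\n z)\,dA-\mathcal G$), and $\mathcal C\le0$ does follow from sub-staticity; but the positive error term $\mathcal G$ depends on $\n z$ for the unknown solution $z$, and nothing in the hypothesis \eqref{ass} forces the sub-static tensor to dominate $\tfrac{1}{2V}\bigl(|\n V|^2 g-\n V\otimes\n V\bigr)$ pointwise --- indeed \eqref{ass} allows $Vh_{\a\b}-V_{,\nu}g_{\a\b}$ to degenerate at points where $\n V\neq 0$, exactly where $\mathcal G$ is positive. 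The proposed absorption via the identities $|\bn V|^2+\k V^2=\mathrm{const}$ and $\n_\a V_{,\nu}=h_{\a\b}\n_\b V$ (both true) is a plan, not an argument, so the proof is incomplete precisely at its decisive step; the rigidity discussion inherits this gap.

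The source of the difficulty is your choice of elliptic equation. You solve $\div_\S(V\n z)=V(H-\ov H^V)$, whose Hessian-type tensor $\tilde T$ carries the drift cross-terms with weight $\tfrac12$; this is what generates $\mathcal G$. The paper instead solves $\De f-\tfrac{\De V}{V}f=H-\ov H^V$ (equivalently $\div_\S(V^2\n(f/V))=V(H-\ov H^V)$, cross-terms with weight $1$), and for $A_{\a\b}=f_{\a\b}-\tfrac{V_{\a\b}}{V}f$ the master formula \eqref{equ10.5 nc}, applied to the closed manifold $\S$, gives the \emph{identity}
\begin{equation*}
\int_\S V\left(A^2-|A_{\a\b}|^2\right)dA \;=\; \int_\S \left(\De V\,g-\n^2V+VR^\S\right)\left(\n f-\tfrac{\n V}{V}f,\ \n f-\tfrac{\n V}{V}f\right)dA\;\ge\;0,
\end{equation*}
with no leftover term: the $\mathcal G$-type term cancels identically for this operator. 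Your Codazzi integration by parts works verbatim for $A_{\a\b}$, since the antisymmetric cross terms $V_\a f_\b-f_\a V_\b$ die against the symmetric trace-free tensor $\mathring h$, and $\int_\S(H-\ov H^V)(V\De f-f\De V)\,dA=-\int_\S\langle\n H, V\n f-f\n V\rangle\,dA$ still launches the argument. So the repair is simply to switch equations; as written, your proof does not close. Two smaller points: with the paper's orientation conventions your principal-frame factor should read $(V\k_i-V_{,\nu})$ rather than $(V\k_i+V_{,\nu})$, and the appeal to star-shapedness ($V_{,\nu}\ge0$) is neither among the hypotheses nor needed once the clean identity above is used; for rigidity, the paper's route (equality forces $\mathring h=\mathring A$ and the sub-static form to vanish on $\n(f/V)$, then an elliptic point plus an openness--closedness argument gives umbilicity) is the one to follow.
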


When $V\equiv 1$, inequality \eqref{AF2} is a special case of the classical Alexandrov-Fenchel inequality for convex hypersurfaces in the Euclidean space.
In view of \eqref{rm2} and \eqref{AF2}, we complete the picture of  the weighted Alexandrov-Fenchel inequality in $\HH^3$ and $\SS_+^3$. We believe the weighted Alexandrov-Fenchel inequality holds true in general.

The almost Schur lemma in terms of  scalar curvature has been shown by de Lellis-Topping \cite{dLT}. Later Ge-Wang \cite{GW} observed the equivalence of the almost Schur lemma and the Alexandrov-Fenchel type inequality.  For closed hypersurfaces in space forms, an unweighted almost Schur lemma in terms of mean curvature has been proved by Perez \cite{Pe} along the line of \cite{dLT}, see also \cite{CZ}. Our proof of Theorem \ref{almost Schur} relies on the integral formula for closed hypersurfaces and the crucial observation that a horo-convex (convex resp.) hypersurface in $\HH^n$  ($\SS_+^n$ resp.) is a closed sub-static manifold. 

A similar weighted almost Schur lemma for sub-static closed manifolds, in the flavor of de Lellis-Topping, is also presented at the end of this paper.

\

The rest of the paper is organized as follows. In section 2, we introduce our notations and preliminaries. In Section 3, we prove the integral formula and also its variations. Section 4-6 will be devoted to prove the Heintze-Karcher type  inequality, the Minkowski type inequality and the Alexandrov-Fenchel type inequality. \\

We note that a similar result on Heintze-Karcher type inequality appeared on arXiv in \cite{WW} recently. However, the method in \cite{WW} is different. The authors were interested in static manifolds motivated from applications from general relativity. 

\section{Notations and preliminaries}\label{sec 2}

Throughout this paper, we denote by $\bn$, $\bd$ and $\bn^2$ the gradient, the Laplacian and the Hessian on $(M,\bar g)$ respectively, while by $g$, $\n$, $\De$ and $\n^2$ the induced metric, the gradient, the Laplacian and the Hessian on $\p\O$ respectively.  Let $\nu$ be the unit outward normal of $\p\O$. We denote by $h(X,Y)=\bar g(\on_X \nu, Y)$ and $H=\tr_{\bar g} h$ the second fundamental form and the mean curvature (with respect to $-\nu$) of $\p\O$ respectively. Let $d\O$ and $dA$ be the canonical measure of $(M,\bar g)$ and $\p\O$ respectively, which will be omitted frequently for simplicity. Let ${\rm Ric}$ and ${\rm R}$ be  the Ricci curvature tensor and the scalar curvature of $(M,\bar g)$ respectively. For simplicity,  we use a subscript to  denote the covariant derivative with respect to $g$, and a comma plus a subscript to denote that with respect to $\bar g$, for example ${f}_\a=\n_\a f$, ${\rm R}_{,i}=\bn_i {\rm R}$. We will use the Einstein summation convention and also the convention that the Latin indices run through $1$ to $n$ while the Greek indices run through $1$ to $n-1$.

Our definitions of the {\rm static} and the {\rm sub-static} Riemannian triples  in Definition \ref{defi static} has a deep background in the study of general relativity. Let $(M^n, \bar g)$ be a time slice of a {\em static spacetime} $\widehat{M}^{n+1}=\mathbb{R}\times M^n$ with Lorentzian metric $\widehat{g}= -V^2 dt^2+\bar g$.  
Then the $(0,2)$-tensor $Q$ has a natural physical meaning in the static spacetime. $(\widehat{M}, \widehat{g})$ is said to satisfy the {\em Null Convergence Condition (NCC)} if $\widehat{{\rm Ric}}(X, X)\geq 0$ for all null vector fields $X$ in $T\widehat{M}$. It is known that the sub-static condition of a Riemannian triple $(M, \bar g, V)$ is equivalent to the NCC condition of the corresponding static spacetime $(\widehat{M}, \widehat{g})$,  c.f. \cite{WWZ}. More specifically, $Q_{ij}=\wh R_{ij}X_iX_j$ under orthnormal frames.

It is also well-known that a static Riemannian triple must be of constant scalar curvature and the zero set $\{V=0\}$ is a totally geodesic regular hypersurface, see e.g. \cite{FM, Cor}.

\subsection{Local and global warped product spaces}\label{sec 2.2}
The class of warped product manifolds always serves as nontrivial examples in the theory of general relativity. We briefly review some elementary fact for warped product manifolds here.

Let $M^n=[0,\bar r)\times N^{n-1}$ be a  warped product manifold with metric $\bar g=dr^2+\l(r)^2g_N$, where $N$ is an $(n-1)$-dimensional closed manifold and $\l: [0,\bar r)\to \mathbb{R}$ is a smooth positive function. Usually the following two type of conditions are assumed:

    \begin{enumerate}[label=\textbf{H.\arabic*}]
\item \label{cond 1} $\l'(0)=1, \l''(0)=0$ and $\l'(r)>0$ for $r\in (0,\bar r)$;
\item \label{cond 2}  $\l'(0)=0, \l''(0)> 0$ and $\l'(r)>0$ for $r\in (0,\bar r)$.
\end{enumerate}
In \ref{cond 1} case, $M$ is complete around $N\times \{0\}$ and this case corresponds to condition (H1'-H2') in \cite{Br}. In \ref{cond 2} case, $M$ has a horizon $N\times \{0\}$ which corresponds to condition (H1-H2) in \cite{Br}.

By a parameter transformation $s=\l(r)$ so that $s\in [\underline s, \bar s)$, the metric is transformed as $$\bar g=\frac{1}{V(s)^2}ds^2+s^2 g_N,$$ where $V(s)=\l'(r)$. It follows that $\l''(r)= \l'(r)\frac{d}{ds}V(s)=\frac{d}{ds}\frac12V(s)^2$, which means $\frac12V(s)^2$, as a function of $s$, is smooth. Thus $\frac{\l'''(r)}{\l'(r)}=\frac{d^2}{ds^2}\frac12V(s)^2$ is a smooth function with respect to $r$.

In this case, $M^n, \bar g$ and $V=\l'(r)$ constitute a typical Riemannian triple.
By a direct computation, see e.g. \cite{Br}, we have 
\begin{eqnarray*}
{\rm Ric}={\rm Ric}_N-(\l'\l''+(n-2)\l^2)g_N-(n-1)\frac{\l''}{\l}dr\otimes dr.
\end{eqnarray*}
and
\begin{eqnarray}\label{hessV}
\frac{\bn^2 V}{V}=\l\l'' g_N+\frac{\l'''}{\l'}dr\otimes dr
\end{eqnarray}
which is smooth on $M$.
Thus
\begin{eqnarray*}
Q&=&\frac{\bd V\bar g-\bn^2 V+V{\rm Ric}}{V}\\&=&({\rm Ric}_N-(n-2)\rho g_N)\\&&+\left(\l^2\frac{\l'''}{\l'}+(n-3)\l\l''+(n-2)\l'(\rho-\l'^2)\right)g_N.
\end{eqnarray*}
for some constant $\rho\in\mathbb{R}$.

As shown in  \cite{Br}, if ${\rm Ric}_N\geq (n-2)\rho g_N$ and the function $2\frac{\l''}{\l}-(n-2)\frac{\rho-\l'^2}{\l^2}$ is non-decreasing for $r\in [0,\bar r)$, then
$VQ=\bd V\bar g-\bn^2 V+V{\rm Ric}\geq 0$, which means $(M^n, \bar g, V)$ is a sub-static Riemannian triple in the language of this paper.

\begin{defi}
    \label{local warped}
A Riemannian triple $(M, \bar g, V)$ is said to have {\em local warped product structure} near the the inner boundary horizons $N_l$  (see Definition \ref{homo})
if locally around $N_l$ the metric has the warped product structure $\bar g=dr^2+\lambda_l^2(r)g_{N_l}, r\in [0,\e_l)$ for small $\e_l>0$, where $\l_l(r)$ is a positive function on $[0,\e_l)$.  
\end{defi}
Assume inside the $\e$-neighborhood of $N_l$, the potential function $V=V_{N_l}(r)=\l'_l(r)$. Then on each horizon, we have $V(0)=\lambda'_l(0)$, $V_{\nu_{N_l}}(0) = -\lambda_l''(0) $, $H_{N_l}(0)=0$, and it follows from \eqref{hessV} that
\begin{align}
    \di\frac{\bd V \bar g-\bn^2V}{V}(\nu_{N_l}, \nu_{N_l})=\di\frac{(n-1)\lambda''(0)}{\lambda(0)}>0,
    \label{RIC}
\end{align}
in regard to \ref{cond 2}.

\subsection{Inner boundary condition} In this paper, the ambient manifold $M^n$ may be complete without boundary, such as $\mathbb R^n$ and $\mathbb H^n$; may be compact with one or more boundaries such as $\mathbb S^n_+$ and the deSitter-Schwarzschild space; may also have a complete end together with one or more compact boundary components, such as the anti-deSitter-Schwarzschild space etc..

We first  modify the topology of $M^n$ by compactifying all of the ends of $M^n$ except the chosen end by adding the points $\{\infty_k\}$. Consequently, any compact orientable hypersurface divides $M^n$ into two regions, an inside (the open set) and an outside which contains the chosen end. It then makes sense to say one closed hypersurface is contained entirely inside another one. We given the following definition.

\begin{defi}\label{def outermost}
    \
        Suppose the boundary of a domain $\O$ has several connected embedded compact hypersurfaces as components, say $\p\O=\di\S\cup \bigcup_{l=1}^\tau N_l$. $\S$ is called the {\em outermost} boundary hypersurface if all the other components $N_l, l=1,\cdots, \tau$, are contained entirely inside it.
\end{defi}

We mainly consider a bounded domain which possesses an outermost boundary component $\S$ together with no or several compact inner boundary components. These inner boundaries are typically horizons, see the definition of {\em horizons} in \cite{B}. For convenience, we also introduce the following {\it inner boundary condition} for a domain $\O$ in $M$.

\begin{defi}\label{homo} 
    Let $(M^n, \bar g, V)$ be a Riemannian triple. Let $\O\subset M^n$ be a domain with smooth boundary $\P\O=\di\S\cup \bigcup_{l=1}^\tau N_l$ where $\S$ is an outermost boundary component and $N_l$ are compact inner boundaries. $\O$ satisfies {\em inner boundary condition} if the following conditions on the inner boundary components hold:

    \begin{enumerate}
        \item [(i)] $N_l$ are minimal hypersurfaces;
        \item [(ii)] The potential function $V>0$ in $\O$ and $V=0$  on each $N_l$, $l=1,\cdots,\tau$;
        \item [(iii)] On each of the inner boundary components (horizons),
\begin{equation}
    \di\frac{\bd V \bar g_{\nu\nu} -\bn_{\nu\nu}  V}{V}(z)> 0
    \label{constant Ricci}
\end{equation}
for any $z\in N_l$.
    \end{enumerate}
\end{defi}

\begin{rema}\label{rema homo}We notice that

    \begin{enumerate}
        \item  $\O$ trivially satisfies the inner boundary condition when $\P\O$ has only one boundary component $\S$. 
        \item For proving theorems in this paper, it is enough to assume the inner boundaries are minimal hypersurfaces instead of totally geodesic.
        \item  If the manifold has a local warped product structure around the horizons with the static potential being $\l'(r)$ where $\l(r)$ is the warping function, then the {\em inner boundary conditions} are automatically satisfied in view of (\ref{RIC}). Moreover, the quotient in (\ref{constant Ricci}) are constants on each horizon.
        \item In case $M^n$ is a static manifold, then the event horizons are all totally geodesic. Moreover by the Gauss equation and static equation,on each inner boundary (event horizon), we have
            \begin{align*}
                \di\frac{\bd V  -\bn_{\nu\nu}  V}{V}=-{\rm Ric}(\nu,\nu)=\frac12({\rm R}_{N_l}-\rm R),
            \end{align*}
            where $\rm R$ is the scalar curvature of the static manifold which is a constant and $ {\rm R}_{N_l}$ is the scalar curvature of the event horizon. This implies that $c_l=-\di\frac{2(n-1)}{n}\frac{1}{\max_{ N_l}{(\rm R}_{N_l}-\rm R)}$ in Theorem \ref{HK theo} since $N_l$ are compact.
    \end{enumerate}
\end{rema}

\subsection{First eigenvalue} It is important for us to prove that the first Dirichlet eigenvalue of the linear operator $\bd-q$ in $\O$ is positive where $q:=\frac{\bd V}{V}$ is a smooth function which is continuous up to  boundary. Recall the first eigenvalue for a bounded domain $\O$ is defined as 
\begin{eqnarray}
  &&  \lambda_1\left(\bd-q,\O\right)= \di \inf_{f\in H^1_o(\O), \|f\|^2_{L^2(\O)}=1}\int_{\O} |\bn f|^2+qf^2. 
    \label{firsteigen def}
\end{eqnarray}

Since $M^n$ may possibly be a manifold with boundary, we denote by $\mathring{M}$ as its interior. Moreover, $M^n$ may possibly contain one or more complete end(s). 
\begin{lemm}
    \label{lemm first} Suppose $V\in C^\infty(\mathring{M})$ and $V>0$ in $\mathring{M}$. Let $q:=\frac{\bd V}{V}$.
    If $\O\subset \mathring{M}$ is a bounded domain, then 
$\l_1\left(\bd-q, \O\right)\ge0.$
        Moreover, if $\O\neq \mathring{M}$, then 
$\l_1\left(\bd-q, \O\right)>0.$
\end{lemm}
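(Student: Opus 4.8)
The plan is to exploit the fact that, by the very definition $q=\bd V/V$, the positive function $V$ is a solution of $-\bd V+qV=0$ on $\mathring M$, so $V$ serves as a ground state and the operator ought to be nonnegative on every subdomain. Concretely, I would perform the ground-state substitution $f=V\phi$. For $f\in C_c^\infty(\O)$ the function $\phi=f/V$ again lies in $C_c^\infty(\O)$, since $V>0$ on $\mathring M\supset\O$. Expanding,
\begin{align*}
|\bn f|^2+qf^2=\phi^2|\bn V|^2+2V\phi\,\langle\bn V,\bn\phi\rangle+V^2|\bn\phi|^2+V\phi^2\,\bd V,
\end{align*}
and integrating the last term by parts (no boundary contribution, by compact support),
\begin{align*}
\int_\O V\phi^2\,\bd V=-\int_\O\phi^2|\bn V|^2-\int_\O 2V\phi\,\langle\bn\phi,\bn V\rangle.
\end{align*}
The two $\langle\bn V,\bn\phi\rangle$ terms and the two $\phi^2|\bn V|^2$ terms cancel, leaving the clean identity
\begin{align*}
\int_\O|\bn f|^2+qf^2=\int_\O V^2|\bn\phi|^2\ge0.
\end{align*}
Since $C_c^\infty(\O)$ is dense in $H^1_0(\O)$, this gives $\l_1(\bd-q,\O)\ge0$, proving the first assertion.

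For the strict inequality, I would recast the transform as a weighted Rayleigh quotient: writing $f=V\phi$ and using $\|f\|_{L^2}^2=\int_\O V^2\phi^2$,
\begin{align*}
\l_1(\bd-q,\O)=\inf\left\{\frac{\int_\O V^2|\bn\phi|^2}{\int_\O V^2\phi^2}\ :\ V\phi\in H^1_0(\O),\ \phi\not\equiv0\right\}.
\end{align*}
Because $\O$ is a bounded domain, the direct method yields a minimizer $\phi_1\not\equiv0$. Suppose, for contradiction, that $\l_1=0$. Then $\int_\O V^2|\bn\phi_1|^2=0$, and since $V>0$ throughout $\O$ this forces $\bn\phi_1=0$ a.e.; as $\O$ is connected, $\phi_1\equiv c$ for some constant $c\neq0$. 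Hence $f_1=cV\in H^1_0(\O)$, so the trace of $V$ vanishes on $\p\O$.

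I would then close the argument topologically. Since $V>0$ everywhere on $\mathring M$, the vanishing of $V$ on $\p\O$ forces $\p\O\cap\mathring M=\emptyset$, i.e. $\p\O\subset\p M$. Consequently $\O$ is open and relatively closed in the connected set $\mathring M$ and nonempty, whence $\O=\mathring M$, contradicting the hypothesis $\O\neq\mathring M$. Therefore $\l_1(\bd-q,\O)>0$. In the applications one has $\p\O=\S\cup\bigcup_l N_l$ with $V>0$ on the outermost component $\S$; the mere presence of $\S$ already forces $c=0$, giving $\l_1>0$ at once.

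The main obstacle is the possible degeneration of $V$ — hence of the weight $V^2$ and of the potential $q=\bd V/V$ — as $\p\O$ approaches $\p M$, which is exactly where $V\to0$. This bears on both the density step and the attainment of the weighted infimum, since the associated Poincar\'e inequality then carries a degenerate weight. In every case considered here this is controlled by the fact that $\bn^2V/V$, and therefore $q$, extends continuously up to the boundary — for the warped-product collars this is explicit from \eqref{hessV} — so $q$ remains bounded and the standard spectral theory (Rellich compactness, positivity and simplicity of the first eigenfunction) applies without modification.
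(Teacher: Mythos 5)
Your proof of the nonnegativity part is, in substance, the same as the paper's: the paper integrates by parts to get $\int_\O |\bn f|^2+\frac{\bd V}{V}f^2=\int_\O\left|\bn f-\frac{\bn V}{V}f\right|^2$, and since $\bn f-\frac{\bn V}{V}f=V\bn(f/V)$, that is exactly your ground-state identity $\int_\O V^2|\bn\phi|^2$. For the strict inequality you take a genuinely different route. The paper enlarges the domain: since $\O\neq\mathring{M}$, it chooses a bounded domain $\O'$ with $\O\subsetneq\O'\subset\mathring{M}$ and invokes strict domain monotonicity, $\l_1(\O)>\l_1(\O')\ge 0$, where the last inequality is the first part applied to $\O'$. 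You instead work on $\O$ itself: produce a minimizer, show that $\l_1=0$ would force the minimizer to be a nonzero constant multiple of $V$, hence $V\in H^1_0(\O)$, and rule that out because $V>0$ at boundary points of $\O$ lying in $\mathring{M}$. Your version is more self-contained — the monotonicity ``fact'' the paper cites conceals the same machinery (existence of a first eigenfunction plus positivity/unique continuation) — and it has the added merit of locating exactly when $\l_1=0$ can occur: only when $\O=\mathring{M}$ and $V$ itself lies in $H^1_0(\mathring{M})$, e.g.\ the hemisphere with $V=\cos r$, which is precisely the case the hypothesis excludes.

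Two caveats, each with a counterpart in the paper's own proof. First, the step from $f_1=cV\in H^1_0(\O)$ to $\p\O\cap\mathring{M}=\emptyset$ requires some regularity of $\p\O$: membership in $H^1_0$ only forces vanishing quasi-everywhere, so a portion of $\p\O\cap\mathring{M}$ of zero capacity is invisible to the argument. Indeed, for $\O=\mathring{M}\setminus\{p\}$ with $\mathring{M}$ the open hemisphere and $V=\cos r$, one has $H^1_0(\O)=H^1_0(\mathring{M})\ni V$ and $\l_1=0$, so the lemma in its stated generality fails — and the paper's monotonicity step fails for the same domain. Since the lemma is only ever applied to domains with smooth boundary, this is a caveat to be stated rather than an error, but you should say explicitly that you use (say) Lipschitz regularity of $\p\O$ so that the trace is the pointwise restriction of the continuous function $V$. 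Second, your direct-method step (and the extension of the identity from $C^\infty_c$ to $H^1_0$, where Fatou gives the one inequality you actually need) requires $q$ to be bounded on $\O$; this is not among the lemma's hypotheses, but it holds in every application since $\bn^2V/V$ is assumed continuous up to $\p\O$ — you flag this correctly at the end, and the paper's argument needs the same boundedness to make its eigenvalue theory work.
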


\begin{proof}
To prove the first statement, we only need to show that 
\[
\di\int_{\O} |\bn f|^2+\frac{\bd V}{V}f^2\ge 0,
\]
for any $f\in C^\infty_c(\O)$ with  $\|f\|^2_{L^2(\O)}=1$. By integration-by-parts, we obtain
\[
    \di\int_{\O} |\bn f|^2+\frac{\bd V}{V}f^2 = \di\int_{\O}\left|\bn f-\frac{\bn V}{V}f\right|^2\ge0,
\]
where we have used the fact that the closure of the support of $f$ is strictly contained inside $\O$ so that the boundary integration vanishes.

To prove the last statement, we recall the fact that if one bounded domain is strictly contained in another one, e.g., $\O\subsetneq \O'$, then 
\[
\lambda_1(\O)>\lambda_1(\O').
\]
If $\O\neq \mathring{M}$, then there exists a bounded domain $\O'\subset \mathring{M}$ which strictly contains $\O$, i.e., $\O\subsetneq\O'\subset \mathring{M}$, then combining with (i), we have
\[
\lambda_1(\O)>\lambda_1(\O')\ge 0.
\]
\end{proof}

\

\section{An integral formula}\label{sec 3}
In this section we will prove Theorem \ref{theo rf nc}. For that purpose, we first prove a more general integral formula.

\begin{theo}[General Integral Formula]
    \label{theo grf}
    Let $(M, \bar g)$ and $\O$ be as in Theorem \ref{theo rf nc}.
Let $P_{ij}$ be a smooth symmetric $(0,2)$-tensor on $\O$ and $P$ be the trace of $P_{ij}$ with respect to $\bar g$. We define
$A_{ij}(f):=f_{,ij}+\frac{1}{n-1}Pf\bar g_{ij}-fP_{ij}$
    and its trace with respect to $\bar g$ is $A(f)=\bd f+\frac{P}{n-1}f$. 
    
    Then for any smooth functions $V, f\in C^\infty(\overline{\O})$, we have  
\begin{eqnarray} \label{equ10}
        &&\di\int_{\O}{V}[ A(f)^2- |A_{ij}(f)|^2]
        \\&=&\di\int_{\P\O}{V} h(\n z,\n z)+2{V} u\Delta z+{V} Hu^2+{V}_{,\nu}|\n z|^2 \nonumber\\
        &&+\di\int_{\P\O}2{V} zf_{,i}P_{i\nu}-z^2{V}_{,i}P_{i\nu}-z^2{V} P_{\nu i,i} \nonumber\\
&&\di+ \int_{\O} \left[({V}_{,ij}-\bar\Delta {V} \bar g_{ij}-{V} P_{ij})+{V} (R_{ij}-P_{ij})\right]f_{,i}f_{,j} \nonumber\\ &&\di +\int_{\O}\left[P_{ij}({V}_{,ij}+\frac{1}{n-1}P{V}\bar g_{ij}-{V} P_{ij}) +{V} P_{ij,ji}+2{V}_iP_{ij,j}\right]f^2. \nonumber
   \end{eqnarray}
\end{theo}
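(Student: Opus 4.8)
The plan is to prove \eqref{equ10} by reducing the weighted quadratic integrand to a weighted Bochner--Reilly computation, plus cross and lower-order corrections, while tracking the boundary contributions carefully. All the integrations by parts below are justified since $V,f\in C^\infty(\overline\O)$ and $P_{ij}$ is smooth.

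First I would expand the integrand pointwise. Substituting the definitions of $A(f)$ and $A_{ij}(f)$ and using $\bar g_{ij}\bar g^{ij}=n$ and $\bar g^{ij}P_{ij}=P$, the cross terms $\frac{2P}{n-1}f\bd f$ cancel and the three $P^2f^2$-coefficients collapse to $\frac{P^2}{n-1}$, giving
\[
A(f)^2-|A_{ij}(f)|^2=(\bd f)^2-|f_{,ij}|^2+2ff_{,ij}P_{ij}-|P_{ij}|^2f^2+\frac{P^2}{n-1}f^2.
\]
Thus $V[A(f)^2-|A_{ij}(f)|^2]$ splits into a pure second-order part, a cross part $2Vff_{,ij}P_{ij}$, and a zeroth-order part, and I treat each separately.

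Next I would handle the pure part $\int_\O V[(\bd f)^2-|f_{,ij}|^2]$ by integrating by parts twice. Integrating $\int_\O V|f_{,ij}|^2$ in the outer index and invoking the Ricci commutation identity $f_{,ijj}=(\bd f)_{,i}+R_{ij}f_{,j}$ produces the curvature term $\int_\O VR_{ij}f_{,i}f_{,j}$; the derivatives falling on $V$, after a second integration by parts, assemble into $\int_\O(V_{,ij}-\bd V\bar g_{ij})f_{,i}f_{,j}$. This leaves the interior term $\int_\O[(V_{,ij}-\bd V\bar g_{ij})+VR_{ij}]f_{,i}f_{,j}$ together with a boundary integral $\int_{\P\O}(V_{,\nu}|\bn f|^2-V_{,i}f_{,i}u+Vu\bd f-Vf_{,ij}f_{,i}\nu_j)$. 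For the cross term $\int_\O 2Vff_{,ij}P_{ij}$, one integration by parts in the outer index yields $-\int_\O 2VP_{ij}f_{,i}f_{,j}$, so that the total $f_{,i}f_{,j}$-coefficient becomes exactly $(V_{,ij}-\bd V\bar g_{ij}-VP_{ij})+V(R_{ij}-P_{ij})$ as claimed; the remaining pieces carry $ff_{,i}=\frac12(f^2)_{,i}$, and a second integration by parts converts them, after combining with the zeroth-order $\frac{P^2}{n-1}-|P_{ij}|^2$ contribution and using $P_{ij}=P_{ji}$, into $\int_\O[P_{ij}(V_{,ij}+\frac{1}{n-1}PV\bar g_{ij}-VP_{ij})+VP_{ij,ji}+2V_iP_{ij,j}]f^2$, plus the boundary terms $\int_{\P\O}(2Vzf_{,i}P_{i\nu}-z^2V_{,i}P_{i\nu}-z^2VP_{\nu i,i})$ on setting $f|_{\P\O}=z$.

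The main obstacle, as in every Reilly-type formula, is to rewrite the pure-part boundary integral in terms of intrinsic boundary data. Here I would use the standard tangential--normal splitting at $\P\O$: $|\bn f|^2=|\n z|^2+u^2$, $\bd f=\De z+Hu+f_{,\nu\nu}$, and $f_{,\alpha\nu}=u_{,\alpha}-h_{\alpha\beta}z_{,\beta}$, whence $f_{,ij}f_{,i}\nu_j=\langle\n u,\n z\rangle-h(\n z,\n z)+uf_{,\nu\nu}$. The $uf_{,\nu\nu}$ contributions then cancel, and an integration by parts on the closed manifold $\P\O$ (turning $\langle\n u,\n z\rangle$-type terms into $u\De z$-type terms and, with the weight present, generating the $V_{,\nu}|\n z|^2$ term together with the pairing of the $V_{,i}f_{,i}u$ terms) produces precisely $\int_{\P\O}(Vh(\n z,\n z)+2Vu\De z+VHu^2+V_{,\nu}|\n z|^2)$. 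Assembling the interior and boundary contributions from all three parts yields \eqref{equ10}; the delicate step is exactly this boundary decomposition together with correctly pairing the weight-derivative boundary terms.
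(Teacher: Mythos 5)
Your proposal is correct and follows essentially the same route as the paper's proof: the same pointwise expansion of $A(f)^2-|A_{ij}(f)|^2$, the same double integration by parts with the Ricci identity $f_{,ijj}=(\bd f)_{,i}+R_{ij}f_{,j}$ for the pure and cross terms, and the same Gauss--Weingarten splitting plus tangential integration by parts on $\p\O$ to reach the intrinsic boundary expression. No gaps; the cancellations you flag (the $uf_{,\nu\nu}$ terms and the $\langle\n u,\n z\rangle$ pairing) are exactly the ones occurring in the paper's computation.
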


\begin{rema}
 When $V\equiv 1$ and $P_{ij}=0$, \eqref{equ10} reduces to Reilly's original formula \cite{Re1, Re2}. when $P_{ij}= R^{\kappa}_{ij}=(n-1)\k\bar g_{ij}$, \eqref{equ10} reduces to the integral formula in \cite{QX}.
\end{rema}

\begin{proof}
    It is direct to verify that 
\[
    \begin{array}[]{rll}
        A(f)^2-|A_{ij}(f)|^2 =& ( \bd f)^2 - |f_{,ij}|^2+2ff_{,ij}P_{ij}-f^2P_{ij}^2+\frac{1}{n-1}P^2f^2, \\
    \end{array} 
\]
and thus  
\begin{eqnarray}
  &&  \di\int_{\O}{V} [A(f)^2-|A_{ij}(f)|^2]\\&=& \di\int_{\O}{V} ( \bd f)^2 -{V} |f_{,ij}|^2+2{V} ff_{,ij}P_{ij}+{V} (\frac{1}{n-1}P^2  -P_{ij}^2)f^2.\nonumber
    \label{equ1}
\end{eqnarray}

As in  \cite{QX}, by integration by parts and using the Ricci identity $f_{,ijj}=(\bd f)_{,i}+R_{ij}f_{,j}$, we have
\begin{eqnarray}\label{xeq1}
&&\int_\O V|f_{,ij}|^2=\int_{\p\O}Vf_{,i\nu}f_i +\int_\O  -V_{,j}f_{,ij}f_{,i}- Vf_{,ijj}f_{,i}\\&=&\int_{\p\O} V  f_{,i\nu}f_{,i} dA+\int_\O -V_{,j}(\frac12|\on f|^2)_{,j} -V\left((\ode f)_{,i}- R_{ij}f_{,j}\right)f_{,i}  \nonumber
\\&=&\int_{\p\O} V f_{,i\nu}f_{,i}- \frac12|\on f|^2V_{,\nu} -  V\ode f f_{,\nu} \nonumber\\&&+\int_\O \frac12|\on f|^2\ode V+ V(\ode f)^2 + \ode f  V_{,i} f_{,i}-VR_{ij}f_{,i}f_{,j},\nonumber
\end{eqnarray}
and
\begin{eqnarray}\label{xeq2}
&&\int_\O \ode f  V_{,i} f_{,i}=\int_{\p\O} V_{,i}f_{,i}f_{,\nu} +\int_\O -V_{,ij}f_{,i}f_{,j}-V_{,i} (\frac12|\on f|^2)_{,i}\\
&=&\int_{\p\O} V_{,i}f_{,i}f_{,\nu}-\frac12|\on f|^2V_{,\nu}+\int_\O -V_{,ij}f_{,i}f_{,j}+\frac12|\on f|^2\ode V.\nonumber
\end{eqnarray}
It follows from \eqref{xeq1} and \eqref{xeq2} that
\begin{eqnarray}
  && \di\int_{\O}{V} ( \bd f)^2 -{V} |f_{,ij}|^2\\&=& \di\int_{\p\O}{V}  \bd ff_{,\nu}+{V}_{,\nu} |\bn f|^2-{V} f_{,i}f_{,i\nu}-f_{,\nu}{V}_{,i}f_{,i}\nonumber\\&&+\di\int_\O {V}_{,ij}f_{,i}f_{,j}-\bd {V}|\bn f|^2  +{V} R_{ij}f_{,i}f_{,j}.\nonumber \label{equ2}
\end{eqnarray}

On the other hand, we have
\begin{eqnarray}
        &&\di\int_{\O} 2{V} ff_{,ij}P_{ij}\\&=& \di\int_{\p\O}2{V} ff_{,i}P_{i\nu}+\int_{\O}-{V}_{,i}(f^2)_{,j}P_{ij}-2{V} P_{ij}f_{,i}f_{,j}-2{V} ff_{,i}P_{ij,j}\nonumber\\
        &=& \di\int_{\p\O}2{V} ff_{,i}P_{i\nu}-f^2{V}_{,i}P_{i\nu}\nonumber\\
        &&\di+\int_{\O}f^2{V}_{,ij}P_{ij}+f^2{V}_{,i}P_{ij,j}-2{V} P_{ij}f_{,i}f_{,j}-{V}(f^2)_{,i}P_{ij,j} \nonumber\\
&=& \di\int_{\p\O}2{V} ff_{i}P_{i\nu}-f^2{V}_{i}P_{i\nu}-f^2{V} P_{\nu j,j}\nonumber\\
        &&\di+\int_{\O}f^2{V}_{,ij}P_{ij}-2{V} P_{ij}f_{,i}f_{,j}+2f^2{V}_{,i}P_{ij,j}+{V} f^2 P_{ij,ji}.\nonumber
\label{equ3}
\end{eqnarray}

Inserting  (\ref{equ2}) and (\ref{equ3}) into (\ref{equ1}), we have
\begin{eqnarray}\label{equ4}
        &&\di\int_{\O}{V} [A(f)^2-|A_{ij}(f)|^2]
\\&=& \di\int_{\p\O}{V}  \bd ff_{,\nu}+ |\bn f|^2{V}_{,\nu}-{V} f_{,i\nu}f_{,i}-f_{,\nu}{V}_{,i}f_{,i}\nonumber \\
        &&+\di \int_{\p\O} 2{V} ff_{,i}P_{i\nu}-f^2{V}_{,i}P_{i\nu}-f^2{V} P_{\nu j,j}\nonumber\\
        &&+  \di  \int_{\O}[({V}_{,ij}-\bar\Delta {V} \bar g_{ij}-{V} P_{ij})+{V} (R_{ij}-P_{ij})] f_{,i}f_{,j}\nonumber\\ 
        &&\di+\int_{\O}[P_{ij}({V}_{,ij}+\frac{1}{n-1}P{V}\bar g_{ij}-{V} P_{ij}) +{V} P_{ij,ji}+2{V}_{,i}P_{ij,j}]f^2.\nonumber
        \end{eqnarray}

As in \cite{QX}, we can simplify the boundary terms. 
We choose an orthonormal frame $\{e_i\}_{i=1}^n$ such that $e_n=\nu$ on $M$. Note that $z=f|_{\p\O}$ and $u=f_{,\nu}$.
From the Gauss-Weingarten formula we deduce
\begin{eqnarray}\label{xeq7}
&&\int_{\p\O} V\ode ff_{,\nu}-V f_{,i\nu}f_{,i}=\int_M V f_{,\a\a}f_{,\nu}-V f_{,\a\nu}f_{,\a} \\&=&\int_{\p\O} V\left(u\De z+Hu^2- \n u \n z+h(\n z, \n z)\right).\nonumber
\end{eqnarray}
On the other hand,
\begin{eqnarray}\label{xeq8}
&&\int_{\p\O} |\on f|^2 V_{,\nu}-  f_{,\nu} V_{,i}f_{,i} =\int_{\p\O} |\n z|^2 V_{,\nu}-u\n V \n z\\&=&\int_{\p\O} |\n z|^2 V_\nu+V\n u \n z+Vu\De z.\nonumber
\end{eqnarray}
Combining \eqref{xeq7} and \eqref{xeq8}, we obtain
\begin{eqnarray}\label{equ5}
&&\int_{\p\O}{V}  \bd ff_{,\nu}+ |\bn f|^2{V}_{,\nu}-{V} f_{,i\nu}f_{,i}-f_{,\nu}{V}_{,i}f_{,i}
\\&=& \int_{\p\O}{V} h(\n z,\n z)+2{V} u\Delta z+{V} Hu^2+{V}_{,\nu}|\n z|^2.\nonumber
\end{eqnarray}

Pluging (\ref{equ5}) into (\ref{equ4}), we finish the proof.  
\end{proof} 

\noindent{\it Proof of Theorem \ref{theo rf nc}.} 	Since 
 $\frac{\bn^2 V}{V}$ is smoothly continuous up to $\p\O$, we can choose $$P_{ij}=\frac{1}{V}(V_{,ij}-\bd V \bar g_{ij})$$  in \eqref{equ10} and its trace with respect  $\bar g$ is $P=-\frac{n-1}{V}\bd V$. 
Then $A_{ij}(f)=f_{,ij}-\frac{V_{,ij}}{V}f$ and $A(f)=\bd f-\frac{\bd V}{V}f$.
Recall \begin{equation}
    Q_{ij}=\frac{1}{V}[\bd V \bar g_{ij}-V_{,ij}+VR_{ij}].
    \label{static identity}
\end{equation}
Thus $P_{ij}=R_{ij}-Q_{ij}$.  This implies 
\begin{eqnarray}\label{identity P}
 && V_{,ij}+\frac{1}{n-1}P V \bar g_{ij}-V P_{ij}=0, \quad \bd V+\frac{1}{n-1}PV=0.
\end{eqnarray}

We need the following two identities.
\begin{lemm}
    \label{lemma 1}
    \begin{eqnarray}        \label{ident1}
       &&VP_{ij,ji}+2V_{,i}P_{ij,j} =
        V_{,ji}Q_{ij}+\frac{1}{2}V_{,j}{\rm R}_{,j}.
    \end{eqnarray}
\end{lemm}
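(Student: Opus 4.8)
The plan is to derive \eqref{ident1} by differentiating the defining relation $VP_{ij}=V_{,ij}-\bd V\,\bar g_{ij}$ twice, feeding in exactly the two classical tools already used in the proof of Theorem~\ref{theo grf}: the Ricci identity $V_{,ijj}=(\bd V)_{,i}+R_{ij}V_{,j}$ and the contracted second Bianchi identity $R_{ij,i}=\frac12 R_{,j}$. The structural fact that drives everything is $P_{ij}=R_{ij}-Q_{ij}$, so that $R_{ij}-P_{ij}=Q_{ij}$; this is the substitution that will ultimately produce the term $V_{,ji}Q_{ij}$ on the right-hand side.

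First I would compute the single divergence $VP_{ij,j}$. Differentiating $VP_{ij}=V_{,ij}-\bd V\,\bar g_{ij}$ in the $j$ direction and contracting gives
\begin{equation*}
V_{,j}P_{ij}+VP_{ij,j}=V_{,ijj}-(\bd V)_{,i}.
\end{equation*}
Applying the Ricci identity, the two Laplacian-gradient terms cancel and I obtain the clean relation
\begin{equation*}
VP_{ij,j}=R_{ij}V_{,j}-V_{,j}P_{ij}.
\end{equation*}
Next I would take one further covariant derivative in the $i$ direction and contract, writing $VP_{ij,ji}=\bn_i(VP_{ij,j})-V_{,i}P_{ij,j}$. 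Differentiating the displayed expression for $VP_{ij,j}$ and using the contracted Bianchi identity to turn $R_{ij,i}V_{,j}$ into $\frac12 V_{,j}R_{,j}$ yields
\begin{equation*}
VP_{ij,ji}=\frac12 V_{,j}R_{,j}+R_{ij}V_{,ji}-V_{,ji}P_{ij}-V_{,j}P_{ij,i}-V_{,i}P_{ij,j}.
\end{equation*}

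The last assembly step is a short index manipulation: since $P_{ij}$ is symmetric, relabeling indices shows $V_{,j}P_{ij,i}=V_{,i}P_{ij,j}$, so the final two terms combine to $-2V_{,i}P_{ij,j}$. Moving this contribution to the left and replacing $R_{ij}V_{,ji}-V_{,ji}P_{ij}$ by $V_{,ji}Q_{ij}$ via $R_{ij}-P_{ij}=Q_{ij}$ gives precisely \eqref{ident1}. The computation is entirely mechanical once the two classical identities are invoked; the only place demanding care is the bookkeeping of contracted indices — specifically recognizing that the two divergence-type terms $V_{,j}P_{ij,i}$ and $V_{,i}P_{ij,j}$ coincide by symmetry and hence merge into exactly the coefficient $2$ appearing on the left of \eqref{ident1}. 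I do not expect any genuine obstacle beyond this index tracking.
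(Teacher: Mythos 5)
Your proof is correct and follows essentially the same route as the paper: both differentiate the defining relation $VP_{ij}=V_{,ij}-\bd V\,\bar g_{ij}$ twice (contracting each time), apply the Ricci identity $V_{,ijj}=(\bd V)_{,i}+R_{ij}V_{,j}$ and the contracted second Bianchi identity $R_{ij,i}=\frac12 {\rm R}_{,j}$, merge the two divergence terms $V_{,j}P_{ij,i}$ and $V_{,i}P_{ij,j}$ by symmetry of $P_{ij}$, and finish with the substitution $Q_{ij}=R_{ij}-P_{ij}$. The only cosmetic difference is that the paper differentiates the equivalent trace-adjusted form $V_{,ij}+\frac{1}{n-1}PV\bar g_{ij}-VP_{ij}=0$ all at once as $[\cdot]_{,ji}$, whereas you proceed one derivative at a time.
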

\begin{proof}
   By differentiating (\ref{identity P}) twice and using the Ricci identity, we have
 \begin{eqnarray*}
     0&=&\left[V_{,ij}+\frac{1}{n-1}P V \bar g_{ij}-VP_{ij}\right]_{,ji}\\
           &=&\left[(\bd V)_{,i}+R_{ij}V_{,j}+\frac{1}{n-1}(P V)_{,i}-V_{,j}P_{ij}-VP_{ij,j}\right]_{,i}\nonumber\\
            &=&\left[R_{ij}V_{,j}-V_{,j}P_{ij}-VP_{ij,j}\right]_{,i}\nonumber\\
            &=&(R_{ij}-P_{ij})V_{,ji}+R_{ij,i}V_{,j}-2P_{ij,i}V_{,j}-VP_{ij,ji}\nonumber
            \\&=&V_{,ji}Q_{ij}+\frac{1}{2}V_{,j}{\rm R}_{,j}-2P_{ij,i}V_{,j}-VP_{ij,ji}.\nonumber
\end{eqnarray*}
In the last equality we used the contracted second Bianchi identity $R_{ij,i}=\frac12{\rm R}_{,j}$.
\end{proof}

\begin{lemm}
    \label{lemma 2} 
    \begin{equation}
        \di        V_{,j}Q_{ij,i}-\frac{1}{2}V_{,j}{\rm R}_{,j}+\frac{1}{V}Q_{ij}V_{,i}V_{,j}=0.
        \label{equ10.5.5}
    \end{equation}
\end{lemm}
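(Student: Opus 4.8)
The plan is to obtain \eqref{equ10.5.5} from a single clean identity: the divergence of the static tensor $VQ_{ij}=\bd V\,\bar g_{ij}-V_{,ij}+VR_{ij}$. First I would compute $\bn^i(VQ_{ij})$ term by term. The term $\bn^i(\bd V\,\bar g_{ij})$ equals $(\bd V)_{,j}$ because $\bar g$ is parallel. For the Hessian term I would use the Bochner-type commutation formula $\bn^i V_{,ij}=(\bd V)_{,j}+R_{jk}V_{,k}$, which is the Ricci identity applied to the one-form $dV$. For the Ricci term the Leibniz rule together with the contracted second Bianchi identity $R_{ij,i}=\tfrac12{\rm R}_{,j}$ (already invoked in Lemma \ref{lemma 1}) gives $\bn^i(VR_{ij})=V_{,i}R_{ij}+\tfrac12 V{\rm R}_{,j}$. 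Adding the three contributions, the two copies of $(\bd V)_{,j}$ cancel and, crucially, the Ricci terms $-R_{jk}V_{,k}$ and $V_{,i}R_{ij}$ cancel by the symmetry of $R_{ij}$, leaving
\[
\bn^i(VQ_{ij})=\tfrac12 V{\rm R}_{,j}.
\]

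Next I would expand the left-hand side by the product rule as $\bn^i(VQ_{ij})=V_{,i}Q_{ij}+VQ_{ij,i}$, so that $VQ_{ij,i}+V_{,i}Q_{ij}=\tfrac12 V{\rm R}_{,j}$. Contracting with $V_{,j}$ and dividing by $V$, which is legitimate since $V>0$ in the interior, yields precisely
\[
V_{,j}Q_{ij,i}-\tfrac12 V_{,j}{\rm R}_{,j}+\tfrac1V Q_{ij}V_{,i}V_{,j}=0,
\]
which is \eqref{equ10.5.5}.

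The computation is routine; the only points requiring care are the sign bookkeeping in the commutation formula for $\bn^i V_{,ij}$ and the observation that the two Ricci-curvature terms must cancel. This cancellation is exactly what forces the divergence of the static tensor to be proportional to ${\rm R}_{,j}$ with no leftover curvature term; in particular, setting $Q\equiv 0$ recovers the classical fact, quoted in Section \ref{sec 2}, that a static triple has constant scalar curvature on $\{V\neq 0\}$. An alternative route would be to start from Lemma \ref{lemma 1} and substitute $P_{ij}=R_{ij}-Q_{ij}$, but the direct divergence computation above is shorter and more transparent, so I would present that.
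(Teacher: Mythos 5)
Your proposal is correct and follows essentially the same route as the paper: the paper's proof likewise computes the divergence of the static tensor, $VQ_{ij,i}+V_{,i}Q_{ij}=[\bd V\,\bar g_{ij}-V_{,ij}+VR_{ij}]_{,i}=\frac12 V{\rm R}_{,j}$, via the Ricci identity and the contracted second Bianchi identity, with the final contraction against $V_{,j}/V$ left implicit. Your write-up merely makes explicit two steps the paper suppresses --- the cancellation of the two Ricci-curvature terms and the division by $V$ --- so there is nothing to change.
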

\begin{proof}
  By taking first derivative of \eqref{static identity}, using the Ricci identity and the second Bianchi identity, 
 \begin{eqnarray*}
VQ_{ij,i}+V_{,i}Q_{ij}&=&[\bd V \bar g_{ij}-V_{,ij}+VR_{ij}]_{,i}
\\&=&(\bd V)_{,i}-V_{,iji}+V_{,i}R_{ij}+V R_{ij,i}
\\&=&\frac12V {\rm R}_{,j}.
\end{eqnarray*}
\end{proof}
\begin{rema} It is direct to see from Lemma \ref{lemma 2} that a static manifold with potential $V$ must be of constant scalar curvature at the point with $V=0$. One can show that the set $\{V=0\}$ is indeed a regular embedded hypersurface and thus a static manifold must be of constant scalar curvature everywhere see e.g. \cite{FM, Cor}.
\end{rema}

We continue to prove Theorem \ref{theo rf nc}. We first analyze the second boundary integration in (\ref{equ10}). Using the definition of $P_{ij}$, we have
\begin{eqnarray}\label{equ51}
&&\int_{\p\O}2V ff_{,i}P_{i\nu}-f^2V_{i}P_{i\nu}-f^2V P_{\nu j,j}\\&=&\int_{\p\O}2V ff_{,i}P_{i\nu}-f^2(V P_{i\nu})_{,i}\nonumber
\\&=&\int_{\p\O}2ff_{,i}(V_{,i\nu}-\bd V  \bar g_{i\nu}) -f^2[V_{,i\nu i}-(\bd V)_{,i}]\nonumber
\\&=&\int_{\p\O} 2z\bn^2 V (\n z, \nu)-2zu (\De V+HV_{,\nu})-f^2\bar{R}_{i\nu}V_{,i}.\nonumber
\end{eqnarray}
In the last equality we used the Ricci identity. 

Using \eqref{identity P} and \eqref{ident1}, the last two lines of RHS of \eqref{equ10} reduce to
\begin{eqnarray}\label{equ6}
 &&\int_{\O} VQ_{ij}f_{,i}f_{,j} +(V_{,ij}Q_{ij}+\frac{1}{2}V_{,j}R_{,j})f^2
 \\&=&\int_{\p\O} V_{,i}Q_{i\nu}f^2+
 \int_{\O} VQ_{ij}f_{,i}f_{,j} -V_{,j}Q_{ij,i}f^2-2V_{,j}Q_{ij}ff_{,i}+\frac{1}{2}V_{,j}{\rm R}_{,j}f^2\nonumber
 \\&=&\int_{\p\O} V_{,i}Q_{i\nu}f^2+\int_{\O} VQ_{ij}f_{,i}f_{,j} -2V_{,j}Q_{ij}ff_{,i}+\frac{1}{V}Q_{ij}V_{,i}V_{,j}\nonumber
 \\&=&\int_{\p\O} V_{,i}Q_{i\nu}f^2+\int_{\O} \frac{1}{V}Q_{ij}(Vf_{,i}-V_{,i}f)( Vf_{,j}-V_{,j}f).\nonumber
\end{eqnarray}
In the second equality we also used \eqref{equ10.5.5}.
We arrive at \eqref{equ10.5 nc} by replacing  (\ref{equ51}) and (\ref{equ6}) into (\ref{equ10}). The proof is completed.
\qed

An immediate consequence of Theorem \ref{theo rf nc} applying on a sub-static Riemmannian triple is the following
\begin{coro} Let $(M^n, \bar g, V)$ be an $n$-dimensional  sub-static Riemannian triple and $\O\subset M$ be a bounded domain on which $V>0$, then
\begin{eqnarray}        \label{equ10.5.1}
           && \di\int_{\O}V \left(\bd f-\frac{\bd V}{V}f\right)^2-V\left|\bn^2 f-\frac{\bn^2 V}{V}f\right|^2 d\O\\&\geq &\di\int_{\p\O}V h(\n z,\n z)+2V u\Delta z+V Hu^2+V_{,\nu}|\n z|^2 dA\nonumber\\ 
            &&+\di\int_{\p\O} 2z\bn^2 V(\n z, \nu)-2zu (\De V+HV_{,\nu})dA\nonumber\\&&+\int_{\p\O}z^2(Q-{\rm Ric})(\bn V, \nu)dA.\nonumber
            \end{eqnarray}
where $Q$ is the $(0,2)$-tensor in Definition \ref{defi static}.
\end{coro}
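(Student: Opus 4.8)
The plan is to read off the corollary directly from the identity \eqref{equ10.5 nc} in Theorem \ref{theo rf nc}, which already shares the very same left-hand side and the same first two boundary integrals as the desired \eqref{equ10.5.1}. Only two modifications are needed: reinterpreting the last boundary term, and discarding the interior $Q$-integral by invoking the sub-static hypothesis.

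First I would rewrite the boundary term $-z^2\frac{\bn^2 V-\bd V \bar g}{V}(\bn V, \nu)$ that appears in \eqref{equ10.5 nc}. From the defining relation \eqref{def Q}, namely $V\cdot Q=\bd V\bar g-\bn^2 V+V{\rm Ric}$, dividing by $V>0$ gives the pointwise identity $\frac{\bn^2 V-\bd V\bar g}{V}={\rm Ric}-Q$ on $\O$. Substituting this yields
\[
-z^2\frac{\bn^2 V-\bd V \bar g}{V}(\bn V, \nu)=z^2(Q-{\rm Ric})(\bn V, \nu),
\]
which is exactly the last boundary integrand in \eqref{equ10.5.1}.

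Second, I would treat the interior integral $\int_{\O}V\cdot Q(\bn f-\frac{\bn V}{V}f,\bn f-\frac{\bn V}{V}f)\,d\O$ in \eqref{equ10.5 nc}. Since $(M^n,\bar g,V)$ is sub-static, Definition \ref{defi static} guarantees that the $(0,2)$-tensor $V\cdot Q=\bd V\bar g-\bn^2 V+V{\rm Ric}$ is nonnegative definite, so for every vector field $W$ one has $V\cdot Q(W,W)\geq 0$; taking $W=\bn f-\frac{\bn V}{V}f$ shows the integrand is pointwise nonnegative, hence the integral is nonnegative. Dropping this term from the equality \eqref{equ10.5 nc} turns it into the inequality \eqref{equ10.5.1}.

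There is essentially no obstacle here, as the statement is an immediate specialization of Theorem \ref{theo rf nc}. The only points requiring care are the sign bookkeeping when converting the boundary term through \eqref{def Q}, and the observation that positivity of $V$ combined with $V\cdot Q\geq 0$ forces $Q\geq 0$ as a quadratic form, which is precisely what licenses discarding the interior integral.
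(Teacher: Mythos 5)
Your proposal is correct and matches the paper's intent exactly: the paper states this corollary as an immediate consequence of Theorem \ref{theo rf nc} with no written proof, and the argument is precisely your two steps --- rewrite the last boundary term via $V\cdot Q=\bd V\bar g-\bn^2 V+V{\rm Ric}$, then discard the interior integral, whose integrand $V\cdot Q\bigl(\bn f-\tfrac{\bn V}{V}f,\,\bn f-\tfrac{\bn V}{V}f\bigr)$ is pointwise nonnegative by the sub-static condition. One tiny remark: discarding the interior term needs only $V\cdot Q\ge 0$ itself (the integrand is the quadratic form of $V\cdot Q$, not of $Q$), so the positivity of $V$ is needed only for the boundary-term rewriting, not for that step.
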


\

\section{Heintze-Karcher type inequality}

In this section, we prove the Heintze-Karcher type inequalities.

\noindent{\it Proof of Theorem \ref{HK theo}.}  
Suppose $\O\neq \mathring{M}$. 
By virtue of Lemma \ref{lemm first}, we know from the standard elliptic PDE theory that the following Dirichlet boundary value problem \begin{equation}\label{Dirichlet3}
\left\{
    \begin{array}[]{rlll}
\di     \bd f-\frac{\bd V}{V}f&=&1&\mathrm{ in }\ \O,\\
        f&=&0 &\mathrm{ on }\ \S,\\
        f&=&\di c_l &\mathrm{ on }\ N_l, \quad l=1,\cdots,\tau,\\
    \end{array}
    \right.
\end{equation}
where $ c_l=\di\max_{z\in N_l}\left\{- \di\frac{n-1}{n}\frac{V}{(\bd V \bar g-\bn^2 V)(\nu_{N_l}, \nu_{N_l})}\right\}<0$   on each $N_l$ $(l=1,\cdots, \tau)$, admits a unique smooth solution $f\in C^{\infty}(\overline{\O})$. Notice that by the inner boundary condition (c.f. Definition \ref{homo}), on each $N_l$, $V=0$, $H_{N_l}=0$ and $c_l<0$. 

It now follows from the integral formula \eqref{equ10.5 nc}, the equation \eqref{Dirichlet3} and the Cauchy-Schwarz inequality that
\begin{eqnarray}\label{case2-1}
&&\frac{n-1}{n}\int_\O V
\\&\geq &\int_{\S} VHu^2+\di \sum ^\tau_{l=1}c_l^2\int_{N_l} V_{,\nu_{N_l}} \frac{(\bd V \bar g-\bar \n^2V)(\nu_{N_l}, \nu_{N_l})}{V} \nonumber
\\
&\ge &\int_{\S} VHu^2-\frac{n-1}{n}\di \sum ^\tau_{l=1}c_l\int_{N_l} V_{,\nu_{N_l}},\nonumber
 \end{eqnarray}
 where we have used  $V_{,\nu_{N_l}}<0$, which is due to the fact that $V=0$ on $N_l$ while $V>0$ inside $\O$. On the other hand, by integration by parts, using the equation \eqref{Dirichlet3}, we have
\begin{eqnarray}\label{case2-2}
    \int_\O V&=&\int_{\S} (Vf_\nu-V_\nu f)+\sum^\tau_{l=1}\int_{N_l} ( Vf_{,\nu_{N_l}}-V_{,\nu_{N_l}} f)
    \\&=&\int_{\S} Vu-\sum^\tau_{l=1}c_l\int_{N_l}  V_{,\nu_{N_l}}. \nonumber
\end{eqnarray}
Combining \eqref{case2-1} and \eqref{case2-2} and using H\"older's inequality, we obtain
\begin{eqnarray}\label{equa1}
&&\left(\int_\O V d\O+\sum^\tau_{l=1}c_l\int_{N_l}  V_{,\nu_{N_l}}\right)^2\\&=&
\left(\int_\S Vu dA\right)^2
\leq \int_\S VHu^2 dA\int_\S \frac{V}{H}dA\nonumber
\\&\leq &\frac{n-1}{n}\left(\int_\O V d\O+\sum^\tau_{l=1}c_l\int_{N_l}  V_{,\nu_{N_l}}\right)\int_\S \frac{V}{H}dA.\nonumber
\end{eqnarray}
When $\left(\int_\O V d\O+\sum^\tau_{l=1}c_l\int_{N_l}  V_{,\nu_{N_l}}\right)$ is non-positive,  \eqref{Heintze-Karcher} is trivial. When $\left(\int_\O V d\O+\sum^\tau_{l=1}c_l\int_{N_l}  V_{,\nu_{N_l}}\right)$ is positive, \eqref{Heintze-Karcher} follows from \eqref{equa1}. 

When $\mathring{M}$ itself is a bounded domain, we can use an approximation method to prove that the Heintze-Karcher inequality still holds for $\O=\mathring{M}$.

We are remained with the equality assertion. Assume the equality in \eqref{Heintze-Karcher} holds, then every inequality appeared in the proof will be identity. In particular, 
    \begin{equation}
f_{,ij}-\frac{V_{,ij}}{V}f =\frac{1}{n}\bar g_{ij},
        \label{equ232 sub}
    \end{equation}
    since we have used the Cauchy-Schwarz inequality. 
Restricting \eqref{equ232 sub} on the outermost boundary $\S$, and using the boundary condition $z=0$, we have
$h_{\a\b}f_{,\nu}=\frac1n g_{\a\b}$, which implies that $\S$ is umbilical.
\qed

\

\noindent{\it Proof of Corollary \ref{coro local}.} We only need to check the following identity holds on the horizons when there exists a local warped product structure:
\begin{eqnarray*}
c_l\int_{N_l} V_{,\nu_{N_l}} dA&=&-\frac{1}{n}\frac{\l_l(0)}{\l_l''(0)}\int_{N_l} -\l_l''(0) \l_l(0)^{n-1}d{\rm Vol}_{g_{N_l}}\\&=&\frac1n\l_l(0)^n{\rm Vol}(N_l, g_{N_l}).
\end{eqnarray*}
\qed

\

\noindent{\it Proof of Theorem \ref{Alex theo}.} Clearly both geodesic slices and umbilical hypersurfaces lying in the region with constant sectional curvature are  of constant mean curvature. 

Conversely, if $\S$ is of constant mean curvature, then the equality holds in the Heintze-Karcher inequality. From Theorem \ref{HK theo},  we know $\S$ is umbilical and thus $h_{\a\b}=cg_{\a\b}$ for some constant $c$. By the same proof as in Theorem 1.1 of \cite{Br}, ${\rm Ric}(e_\a,\nu)=0$ for $e_\a\in T\S$ and thus the normal vector filed $\nu$ is an eigenvector of the Ricci tensor. 

In the rest, we will show $\S$ is either a slice or an umbilical hypersurface lying in the region with constant sectional curvature. By direct computations, we have the following decomposition of Ricci tensor,  see equation (2) in \cite{Br} or equation (1) in \cite{BE},
\begin{align}
    {\rm Ric}=&\di-\Psi(r)\bar g - \Phi(r)dr\otimes dr.
    \label{Ricci tensor}
\end{align}
where $\Psi(r)=\di\frac{\l''}{\l}-(n-2)\frac{\rho-\l'^2}{\l^2}$ and $\Phi(r)=(n-2)\di\left(\frac{\l''}{\l}+\frac{\rho-\l'^2}{\l^2}\right)$.

Choose normal coordinates $\{\P_r, \P_1,\cdots, \P_{n-1}\}$ at a point, where $\P_\a$, $\a=1,\cdots,n-1$ are coordinates of the base manifold $N$. Then
\begin{align}\label{Rrr}
   {\rm Ric}(\P_r) 
= -(\Psi +\Phi)\P_r, \quad 
 {\rm Ric}(\P_\a) 
= -\Psi \P_\a.  
\end{align}
This implies that $\p_r$ is an eigenvector of the Ricci tensor with respect to the eigenvalue $-\Psi$ and $\{\p_1,\cdots, \p_{n-1}\}$ constitutes a basis of the eigenspace of Ricci tensor with respect to the eigenvalue $-(\Psi +\Phi)$. When $\Phi(r)\neq 0$ for some point on $\S$, then Ricci tensor has two distinct eigenvalues. Thus $\nu$, as an eigenvector, is either parallel or orthogonal to $\P_r$ at this point. 

In general, we denote $S:=\{x\in \S: \Phi(r(x))\neq 0\}$. There are two alternatives: (i) $S\neq \emptyset$; (ii) $S=\emptyset$.

If $S\neq \emptyset$, there exists some point $p$ and its neighborhood $\mathcal{N}_p$ in $S$. We claim $\p_r$ is parallel to $\nu$ on $S$. In fact, if otherwise $\p_r\perp \nu$, then $\p_r$ is a tangent  vector field for points in $S$ and $c=h(\p_r,\p_r)=\<\bn_{\p_r}\p_r,\nu\>=0$, which is impossible. Therefore $\p_r$ is parallel to $\nu$ on $S$ and   $\Phi(r(x))$ is a constant on $S$.  We have now $S$ is a non-empty open and closed set.
It follows from the connectedness of $\S$ that $S=\S$. Hence $\p_r$ is parallel to $\nu$ on $\S$ which implies $\S$ is a geodesic slice.

If  $S=\emptyset$, then $\S$ is contained in the region where $\Phi(r)=0$. 
By computation, we have
\begin{align}\label{Riemancurvature}
R_{ijij}=\l (r)^2(\rho-\l'(r)^2),\quad R_{irir}=-\l(r)\l''(r),
\end{align}
where $\{\p_i\}$ are coordinates vector for $N$ and $R_{ijij}$ is the Riemannian curvature tensor. It is direct to see from \eqref{Riemancurvature} that the points with $\Phi(r)=0$ are isotropic. We conclude from Schur's Theorem that $\S$ is contained in the region with constant sectional curvature.
The proof is completed.
\qed

\section{Minkowski type inequality}
In this section, we prove the Minkowski type inequality. We first make some rearrangement for the boundary terms in the integral formula \eqref{equ10.5 nc}.

\begin{coro}\label{coro rf}
   Let $M$, $\O$, $V$ and $f$ as in Theorem \ref{theo rf nc}. If $\O$ has  a connected boundary component $\S$ on which $V>0$, then
    \begin{eqnarray}\label{equ10.5 Minkowski}
             && \di\int_{\O}V \left(\bd f-\frac{\bd V}{V}f\right)^2-V\left|\bn^2 f-\frac{\bn^2 V}{V}f\right|^2 d\O\\ 
            &=&\int_{\S}V \left[h-\frac{V_{,\nu}}{V} g\right]\left(\n z-\frac{\n V}{V}z, \n z-\frac{\n V}{V}z\right) dA\nonumber\\ 
            &&+\di\int_{\S}VH\left(u-\frac{V_{,\nu}}{V}z\right)^2+2V\left(u-\frac{V_{,\nu}}{V}z\right)\left(\De z-\frac{\De V}{V}z\right) dA\nonumber\\
&&+\di\int_{\p\O\setminus\S}V h(\n z,\n z)+2V u\Delta z+V Hu^2+V_{,\nu}|\n z|^2 dA\nonumber\\ 
&&+\di\int_{\p\O\setminus\S} 2z\bn^2 V(\n z, \nu)-2zu (\De V+HV_{,\nu}) dA \nonumber\\
            &&+\di\int_{\p\O\setminus\S}z^2\frac{\bd V \bar g-\bn^2 V}{V}(\bn V, \nu) dA\nonumber\\
            &&\di+  \int_{\O}\left(\bd V \bar g-\bn^2 V+V {\rm Ric}\right)\left(\bn f-\frac{\bn V}{V}f, \bn f-\frac{\bn V}{V}f\right) d\O.\nonumber
\end{eqnarray}
\end{coro}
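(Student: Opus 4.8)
The plan is to derive \eqref{equ10.5 Minkowski} directly from the integral formula \eqref{equ10.5 nc} of Theorem \ref{theo rf nc} by regrouping the boundary integral over the distinguished component $\S$ into perfect-square form, while leaving the integrals over $\p\O\setminus\S$ essentially untouched. The interior integral already matches: by the definition \eqref{def Q} of $Q$, the term $\int_\O V\cdot Q(\bn f-\tfrac{\bn V}{V}f,\cdot)$ in \eqref{equ10.5 nc} is exactly $\int_\O(\bd V\bar g-\bn^2V+V{\rm Ric})(\bn f-\tfrac{\bn V}{V}f,\cdot)$, so no work is needed there. Thus the entire content of the corollary is a boundary manipulation on $\S$, where the hypothesis $V>0$ legitimizes dividing by $V$.

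First I would isolate, from the two boundary integrals in \eqref{equ10.5 nc}, precisely those terms supported on $\S$, namely
\[
\int_\S Vh(\n z,\n z)+2Vu\De z+VHu^2+V_{,\nu}|\n z|^2+2z\bn^2V(\n z,\nu)-2zu(\De V+HV_{,\nu})-z^2\tfrac{\bn^2V-\bd V\bar g}{V}(\bn V,\nu)\,dA.
\]
The goal is to show this equals the two $\S$-integrals on the right of \eqref{equ10.5 Minkowski}, i.e.
\[
\int_\S V\Bigl[h-\tfrac{V_{,\nu}}{V}g\Bigr]\Bigl(\n z-\tfrac{\n V}{V}z,\n z-\tfrac{\n V}{V}z\Bigr)+VH\Bigl(u-\tfrac{V_{,\nu}}{V}z\Bigr)^2+2V\Bigl(u-\tfrac{V_{,\nu}}{V}z\Bigr)\Bigl(\De z-\tfrac{\De V}{V}z\Bigr)\,dA.
\]
The natural route is to expand the right-hand side fully, using the tangential decompositions $\bn V=\n V+V_{,\nu}\nu$ on $\S$ (so that $\bn^2V(\n z,\nu)$ and $\De V$ relate to the intrinsic derivatives of $V|_\S$ and to $h$ via Gauss--Weingarten), and then match term by term against the isolated $\S$-integral above.

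The key intermediate step will be handling the one genuinely new boundary term, $-z^2\tfrac{\bn^2V-\bd V\bar g}{V}(\bn V,\nu)\,dA$, which on $\S$ must be produced by the cross terms generated when completing the squares. After expanding $[h-\tfrac{V_{,\nu}}{V}g](\n z-\tfrac{\n V}{V}z,\cdot)$ and the mean-curvature square, the pure-$z^2$ contributions should collapse — via the Weingarten relation between $h$, the normal derivative of $V$, and tangential Hessians — into exactly the quantity $\tfrac{\bd V\bar g-\bn^2V}{V}(\bn V,\nu)z^2$, so that the sign convention and the remaining $\p\O\setminus\S$ copy of this term are consistent. I expect this reconciliation of the $z^2$ terms to be the main obstacle, since it is where the extrinsic second-order data on $\S$ enters and where an error in the Gauss--Weingarten bookkeeping (the relation $\bn^2V(\n z,\nu)=\n_\alpha V_{,\nu}\,z_\alpha - h_{\alpha\beta}V_\beta z_\alpha$ type identities) would propagate. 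The cross terms linear in $z$ (pairing $u$ or $\De z$ against $V_{,\nu}$ or $\De V$) should fall out more mechanically once the completed squares are expanded, confirming the $2Vu\De z$, $2z\bn^2V(\n z,\nu)$, and $-2zu(\De V+HV_{,\nu})$ matchings.
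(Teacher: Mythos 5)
Your overall frame is the same as the paper's (the interior term matches by definition of $Q$, the $\p\O\setminus\S$ terms are untouched, and the work is all on $\S$ via Gauss--Weingarten), but your execution plan has a genuine gap: the two $\S$-integrands are \emph{not} equal pointwise, so the term-by-term matching you describe cannot be carried out. To see this immediately, compare the coefficient of $V_{,\nu}|\n z|^2$: it is $+1$ in the $\S$-integrand of \eqref{equ10.5 nc}, but expanding $V\bigl[h-\tfrac{V_{,\nu}}{V}g\bigr]\bigl(\n z-\tfrac{\n V}{V}z,\n z-\tfrac{\n V}{V}z\bigr)$ produces $-V_{,\nu}|\n z|^2$, and no other term on either side contributes such a term. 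Worse, after substituting the Weingarten identities you quote, the left side contains $2z\,\n z\cdot\n V_{,\nu}-\tfrac{z^2}{V}\n V\cdot\n V_{,\nu}$, i.e.\ \emph{tangential derivatives of} $V_{,\nu}$, while the completed-square form in \eqref{equ10.5 Minkowski} contains no derivatives of $V_{,\nu}$ along $\S$ whatsoever; these terms cannot "collapse" algebraically into anything on the right.

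The missing idea is an integration by parts over the closed hypersurface $\S$. Indeed, the difference of the two integrands is exactly the tangential divergence
\begin{equation*}
\n_\a\Bigl[\Bigl(2zz_\a-\frac{z^2V_\a}{V}\Bigr)V_{,\nu}\Bigr],
\end{equation*}
so the identity holds only after integrating and using that $\S$ is closed. This is precisely the paper's step \eqref{equ10.5.3}: one moves the derivative off $V_{,\nu}$ in $\int_\S\bigl(2zz_\a-z^2\tfrac{V_\a}{V}\bigr)\n_\a V_{,\nu}$, producing the terms $\bigl(-2|\n z|^2-2z\De z+\tfrac{2zz_\a V_\a}{V}+z^2\tfrac{\De V}{V}-z^2\tfrac{|\n V|^2}{V^2}\bigr)V_{,\nu}$, and only then does the completion of squares close up. So your proposal is repairable, but as written the central step ("match term by term") would fail, and the repair requires invoking the divergence theorem on $\S$, which your plan never mentions.
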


\begin{proof}
We only need to show the integral over $\S$ in \eqref{equ10.5 nc} has the form stated in \eqref{equ10.5 Minkowski}.

It follows from the Gauss-Weingarten formula that on $\S$,
\begin{eqnarray}\label{static1}
&&V_{,\a\nu}=\n_\a V_{,\nu}-h_{\a\b}V_\b,
\quad\quad \bd V-V_{,\nu\nu}=\De V+HV_{,\nu}.
\end{eqnarray}
Inserting \eqref{static1} into the part of the integral on $\S$ in \eqref{equ10.5 nc}, we deduce
\begin{eqnarray}\label{equ10.5.2}
             &&\di\int_{\S}V h(\n z,\n z)+2V u\Delta z+V Hu^2+V_{,\nu}|\n z|^2 \\ 
&&+\di\int_{\S} 2z\bn^2 V(\n z, \nu)-2zu (\De V+HV_{,\nu})\nonumber\\
 &&+\di\int_{\S}z^2\frac{\bd V \bar g-\bn^2 V}{V}(\bn V, \nu) dA\nonumber\\ &=&\di\int_{\S}V h(\n z,\n z)+2V u\Delta z+V Hu^2+V_{,\nu}|\n z|^2 \nonumber\\ 
            &&+\di\int_{\S}\left(2zu-z^2\frac{V_{,\nu}}{V}\right)(-\De V-HV_{,\nu})\nonumber\\&&+\di\int_{\S}\left(2zz_\a-z^2\frac{V_{\a}}{V}\right)(\n_\a V_{,\nu}-h_{\a\b}V_\b).\nonumber
\end{eqnarray}
By intergration by parts,
\begin{eqnarray} \label{equ10.5.3}
            && \di\int_{\S}\left(2zz_\a-z^2\frac{V_{\a}}{V}\right)\n_\a V_{,\nu}\\&=&     \di\int_{\S}\left(-2|\n z|^2-2z\De z+\frac{2zz_\a V_{\a}}{V}+z^2\frac{\De V}{V}- z^2\frac{|\n V|^2}{V^2}\right) V_{,\nu}. \nonumber 
\end{eqnarray}

By some tedious calculation of completing the square for the terms in \eqref{equ10.5.2} and \eqref{equ10.5.3}, we get our conclusion. The proof is completed.
\end{proof}

\noindent\textit{Proof of Theorem \ref{rm22}.} 
Consider the Neumann boundary value problem
\begin{eqnarray}\label{Neumann}
\left\{
    \begin{array}[]{rlll}
\di      \bd f-\frac{\bd V}{V}f&=&1&\mathrm{in}\ \O,\\
        Vf_{,\nu}-V_{,\nu}f&=&cV &\mathrm{ on }\ \S,
    \end{array}
    \right.
\end{eqnarray}
where $c=\frac{\int_\O V}{\int_\S V}$.
The existence and uniqueness (up to an additive $\a V$) of the solution to \eqref{Neumann} is due to the Fredholm alternative. In fact, it is easy to see that the Neumann problem \eqref{Neumann} is equivalent to 
\begin{eqnarray*}
\left\{
    \begin{array}[]{rlll}
\di      \div_{\bar g}(V^2\bn w)&=&V&\mathrm{in}\ \O,\\
        V^2 \bn_{\nu} w&=&c V &\mathrm{ on }\ \S,
    \end{array}
    \right.
\end{eqnarray*}
by the correspondence $f=wV$.

We will apply the solution $f$ of $\eqref{Neumann}$ to the integral formula \eqref{equ10.5 Minkowski}.

By using the Cauchy-Schwarz inequality, the equation in \eqref{Neumann} and the sub-static condition \eqref{static potential inequality}, we have from \eqref{equ10.5 Minkowski} that
\begin{eqnarray}\label{R1}
&&\frac{n-1}{n}\int_\O V\\&\geq & \int_{{\S}}V \left[h-\frac{V_{,\nu}}{V} g\right]\left(\n z-\frac{\n V}{V}z, \n z-\frac{\n V}{V}z\right)\nonumber\\ 
            &&+\di\int_{{\S}}VH\left(u-\frac{V_{,\nu}}{V}z\right)^2+2V\left(u-\frac{V_{,\nu}}{V}z\right)\left(\De z-\frac{\De V}{V}z\right). \nonumber\end{eqnarray}
By the assumption \eqref{ass},
the first term in RHS of \eqref{R1} is nonnegative.
Therefore, we derive from \eqref{R1} and the boundary condition that
\begin{eqnarray*}
&&\frac{n-1}{n}\int_\O V\nonumber\ge\int_{\S} c^2HV+\int_\S 2c(V\De z-z\De V)=\frac{\left(\int_\O V\right)^2}{\left(\int_\S V\right)^2} \int_{\S} HV.\nonumber
\end{eqnarray*}
It follows that
\begin{eqnarray*}
\left(\int_\S V dA \right)^2\geq \frac{n}{n-1}\int_\O V d\O\int_{\S} HV dA.
\end{eqnarray*}


Assume now  the equality in \eqref{rm2} holds. Then each step above is an equality and we have 
\begin{eqnarray}\label{equality111}
f_{,ij}-\frac{V_{,ij}}{V}f=\frac1n \bar g_{ij}\hbox{ in }\O
\end{eqnarray}
and \begin{eqnarray}\label{equality222}
(h_{\a\b}-\frac{V_{,\nu}}{V}g_{\a\b})\left(\frac{z}{V}\right)_{\a}\left(\frac{z}{V}\right)_{\b}\geq 0\hbox{ on }\S.
\end{eqnarray}

Let $\mathcal{S}:=\{x\in\S: (h_{\a\b}-\frac{V_{,\nu}}{V}g_{\a\b})(x)>0\}$. By the assumption, $\mathcal{S}$ is non-empty, say $p\in \mathcal{S}$. It is clear that $\mathcal{S}$ is open. Thus there is a neighborhood $\mathcal{N}_p\subset \S$ such that $(h_{\a\b}-\frac{V_{,\nu}}{V}g_{\a\b})>0$ in $\mathcal{N}_p$. It follows from \eqref{equality222} that $z=\a V$ for some $\a\in \mathbb{R}$ in $\mathcal{N}_p\subset \S$. We also have from \eqref{equality111} that $\tilde{f}:=f-\a V$ satisfies
\begin{eqnarray*}
\tilde{f}_{,ij}-\frac{V_{,ij}}{V}\tilde{f}=\frac1n \bar g_{ij}.
\end{eqnarray*}
Restrcting on $\mathcal{N}_p$ and using $\tilde{f}=0$ and $\tilde{f}_{,\nu}=c$, we see $h_{\a\b}=\frac{1}{nc}g_{\a\b}$ in $\mathcal{N}_p$. This implies $\mathcal{S}$ is also closed. Hence $\mathcal{S}=\S$. By the same argument applied on $\S$, we find $\S$ is umbilical and of constant mean curvature. 
\qed 


\

\noindent\textit{Proof of Theorem \ref{rm33}.} 
Consider the following Neumann boundary value problem
\begin{eqnarray}\label{Neumann2.1}
\left\{
    \begin{array}[]{rlll}
\di       \bd f-\frac{\bd V}{V}f&=&1&\mathrm{in}\ \O,\\
        Vf_{,\nu}-fV_{,\nu}&=&c_1 V &\mathrm{ on }\ \S,\\
        Vf_{,\nu_{\p M}}-fV_{,\nu_{\p M}}&=&c_0 V_{,\nu_{\p M}} &\mathrm{ on }\ \p M,
    \end{array}
    \right.
\end{eqnarray}
where $c_1=\frac{\int_\O Vd\O+\frac1n\l(0)^n{\rm Vol}(N, g_N)}{\int_\S V}$ and $c_0=-\frac1n \frac{\l(0)}{\l''(0)}$.
Similar argument as in the proof of Theorem \ref{rm22} yields the existence of of a unique solution $f\in C^{\infty}(\overline{\O})$ (up to an additive $\a V$) to \eqref{Neumann2.1}.


We  apply the solution $f$ of $\eqref{Neumann2.1}$ to the integral formula \eqref{equ10.5 Minkowski}.

 We take care of the integral over $\p\O\setminus\S=\p M$ in \eqref{equ10.5 Minkowski}.
Since $V=0$  and $V_{,\nu_{\p M}}=-\l''(0)\neq 0$ on $\p M$, we see from the boundary condition on $\p M$ that  $z=c_0$ on $\p M$. Note also $H=0$ on $\p M$.
It follows that  the integral over $\p\O\setminus\S=\p M$ in \eqref{equ10.5 Minkowski}
\begin{eqnarray*}
&&=\int_{\p M} c_0^2\frac{\bd V\bar g-\bn^2 V}{V}(V_{,\p_r}\p_r, -\p_r)=-\frac{n-1}{n^2}\l(0)^n{\rm Vol}(N, g_N).
\end{eqnarray*}
Here we also used \eqref{hessV}.

Using the sub-static condition \eqref{static potential inequality} and the assumption \eqref{ass} on $\S$, we derive from \eqref{equ10.5 Minkowski} that
\begin{eqnarray}\label{R1.1}
&&\frac{n-1}{n}\int_\O V\\&\geq &
            \di\int_{{\S}}VH\left(u-\frac{V_{,\nu}}{V}z\right)^2+2V\left(u-\frac{V_{,\nu}}{V}z\right)\left(\De z-\frac{\De V}{V}z\right)\nonumber
            \\&&-\frac{n-1}{n^2}\l(0)^n{\rm Vol}(N, g_N). \nonumber\end{eqnarray}
Using the boundary condition on $\S$, we deduce from \eqref{R1.1} that
\begin{eqnarray}
&&\frac{n-1}{n}\left(\int_\O V d\O+\frac1n\l(0)^n{\rm Vol}(N, g_N)\right)\nonumber\\&\geq &\int_{\S} c_1^2HV+\int_\S 2c_1(V\De z-z\De V)\nonumber\\&=&\frac{\left(\int_\O Vd\O+\frac1n\l(0)^n{\rm Vol}(N, g_N)\right)^2}{\left(\int_\S V\right)^2} \int_{\S} HV.\nonumber
\end{eqnarray}
It follows that
\begin{eqnarray*}
\left(\int_\S V dA \right)^2\geq \frac{n}{n-1}\left(\int_\O V d\O+\frac1n\l(0)^n{\rm Vol}(N, g_N)\right)\int_{\S} HV dA.
\end{eqnarray*}

We are remained with the equality assertion.
We first show a geodesic slice $N\times \{r\}$ satisfies the equality in \eqref{case2Minkowski}. Let $X=\l(r)\p_r$ be a conformal Killing vector field such that $\bn X=V\bar g$ and in turn $\n X= V\bar g-\<X, \nu\>h$. Thus we have the Minkowski identity
\begin{eqnarray}\label{Minkowski identity}
&&\int_\S H\<X, \nu\>= (n-1)\int_\S V.
\end{eqnarray}
$H$ is a constant on a slice and hence
\begin{eqnarray*}
(n-1)\int_\S V&=&H\int_\S \<X, \nu\>=
H\left(n\int_\O Vd\O+ \l(0)^n{\rm Vol}(N, g_N)\right).
\end{eqnarray*}
It follows then the equality in \eqref{case2Minkowski} holds.

Conversely, if the equality holds, the same argument as in the proof of Theorem \ref{rm22} shows that $\S$ is of constant mean curvature.
It follows from Theorem \ref{Alex theo}  that $\S$ is a slice. The proof is completed.
\qed

\

\section{Alexandrov-Fenchel type inequality in space forms}
In this section we prove the weighted Alexandrov-Fenchel type inequality.

\noindent{\it Proof of Theorem \ref{almost Schur}.}
Let $K=1$ in the case $\S\subset \SS^n_+$ and $K=-1$ in the case $\S\subset \HH^n$.
We  claim that under our assumption, $\S$ is a closed sub-static manifold, namely \begin{eqnarray}\label{hypersub-static}
\De V g_{\a\b}-V_{\a\b}+V R^{\S}_{\a\b}\geq 0,
\end{eqnarray}
where $R^{\S}_{\a\b}$ is the Ricci tensor of $\S$.

Indeed, by the Gauss formula and the Gauss equation, we have
$$\De V=\bd V- V_{,\nu\nu}-HV_{,\nu} = -K(n-1)V-HV_{,\nu},$$ $$V_{\a\b}=V_{,\a\b}-V_{,\nu}h_{\a\b}=-KV g_{\a\b}-V_{,\nu}h_{\a\b},$$
$$ R^{\S}_{\a\b}=Hh_{\a\b}-h_{\a\g}h_{\b\g}+K(n-2)g_{\a\b},$$
Thus
\begin{eqnarray}\label{convexity}
&&\De V g_{\a\b}-V_{\a\b}+V R^{\S}_{\a\b}=(Vh_{\b\g}-V_{,\nu}g_{\b\g})(Hg_{\a\g}-h_{\a\g}).
\end{eqnarray}
In the case $\S\subset \SS^n_+$, $\S$ is convex. Thus  $Hg_{\a\g}-h_{\a\g}\geq 0$ and $h_{\b\g}\geq 0>\frac{V_{,\nu}}{V}g_{\b\g}$.
In the case $\S\subset \SS^n_+$,  $h_{\b\g}-\frac{V_{,\nu}}{V}g_{\b\g}\geq 0$ by assumption and $h_{\b\g}>0$ follows from this.
In view of \eqref{convexity}, we have \eqref{hypersub-static} in both cases. 

Let $f\in C^\infty (\S)$ be the unique solution of 
\begin{eqnarray}\label{close eq}
\De f-\frac{\De V}{V}f= H-\overline{H}^V \hbox{ on }\S, \quad \int_\S f dA=0.
\end{eqnarray}
The existence follows again from the Fredholm alternative.

For notation simplicity, we denote by $A_{\a\b}=\n^2_{\a\b} f-\frac{\n^2_{\a\b}V}{V} f$, $A=\De f-\frac{\De V}{V}f$ and $\mathring{A}_{\a\b}=A_{\a\b}-\frac{1}{n-1}A g_{\a\b}$.
By integrating by parts, we have
\begin{eqnarray}\label{close eq1}
&&\int_\S V\left|H-\overline{H}^V\right|^2
\\&=&\int_\S (H-\overline{H}^V)(V\De f-f\De V)\nonumber
\\&=&\int_\S -\n H(V\n f-f\n V)\nonumber
\\&=&\int_\S -\frac{n-1}{n-2}\n_\a\left(h_{\a\b}-\frac{H}{n-1}g_{\a\b}\right)(V\n_\b f-f\n_\b V)\nonumber
\\&=& \frac{n-1}{n-2}\int_\S V\left(h_{\a\b}-\frac{H}{n-1}g_{\a\b}\right)\mathring{A}_{\a\b}\nonumber
\\&\leq & \frac{n-1}{n-2}\left(\int_\S V\left|h_{\a\b}-\frac{H}{n-1}g_{\a\b}\right|^2 \right)^{\frac12}  \left(\int_\S V|\mathring{A}_{\a\b}|^2\right)^{\frac12}.\nonumber
\end{eqnarray}
In the third equality, we used the Codazzi property of $h_{\a\b}$, which is a well-known fact for hypersurfaces in space forms. In the last inequality we used the H\"older inequality.

On the other hand, applying the integral formula \eqref{equ10.5 nc} for $\O=\S$ (note that $\S$ is an $(n-1)$-dimensional closed manifold)  and using \eqref{hypersub-static}, we have
\begin{eqnarray*}
\int_\S V \left(\De f-\frac{\De V}{V}f\right)^2 \geq \int_\S V\left| f_{\a\b}-\frac{V_{\a\b}}{V}f\right|^2.
\end{eqnarray*}
It follows that \begin{eqnarray}\label{reilly no bdry}
\int_\S V|\mathring{A}_{\a\b}|^2\leq \frac{n-2}{n-1}\int_\S V \left(\De f-\frac{\De V}{V}f\right)^2.
\end{eqnarray}

Combining \eqref{close eq1} and \eqref{reilly no bdry}, using again the equation \eqref{close eq}, we conclude \eqref{AF1} holds.
Inequality \eqref{AF2} follows from \eqref{AF1} by an algebra computation. This is a key observation due to Ge-Wang \cite{GW}.

Next we prove the assertion for the equality for the case $\HH^n$.  Obviously, if $\S$ is a geodesic ball, then $H$ and $\sigma_2(h)$ are both constants and the equality holds.

Conversely, if the equality in \eqref{AF1} holds, by checking the above proof, we see that
\begin{eqnarray}\label{equal1}
h_{\a\b}-\frac{H}{n-1}g_{\a\b}=\mathring{A}_{\a\b},
\end{eqnarray}
and 
\begin{eqnarray}\label{equal2}
\left(\De V  g-\n^2 V+V {\rm Ric}^\S\right)\left(\n \frac{f}{V}, \n \frac{f}{V}\right)=0.
\end{eqnarray}
Since $\S$ is closed, we know there exists at least one elliptic point $p$ at which all the principal curvatures are larger than $1$. It follows from \eqref{convexity} that $\De V  g-\n^2 V+V {\rm Ric}^\S$ is positive-definite at $p$. From continuity we know that $\De V  g-\n^2 V+V {\rm Ric}^\S$ is  positive-definite in a neighborhood $\mathcal{N}_p$ of $p$. In view of \eqref{equal2}, we have $f=V$ in $\mathcal{N}_p$. It follows then from \eqref{equal1} that $h_{\a\b}=\frac{H}{n-1}g_{\a\b}=c g_{\a\b}$ in $\mathcal{N}_p$ for some constant $c>1$. Using \eqref{convexity} again, we see that
$\De V g-\n^2 V+V {\rm Ric}^{\S} =\frac{n-2}{n-1}c(cV-V_{,\nu})g$ is positive-definite in $\overline{\mathcal{N}_p}$. Therefore the set $\mathcal{S}:=\{x\in\S:  \De V g-\n^2 V+V {\rm Ric}^{\S}>0\}$ is open and closed and thus $\mathcal{S}=\S$ due to the connectedness of $\S$. Applying the above argument to all point in $\S$, we conclude that $\S$ is umbilical.

The proof for the case $\SS_+^n$ is almost the same. Thus we omit it. The proof is completed.
\qed

\

A similar consideration as Theorem \ref{almost Schur} gives a weighted version of  de Lellis-Topping type almost Schur lemma. \begin{theo}
Let $(M^n, \bar g, V)$ be an $n$-dimensional ($n\geq 3$) closed sub-static Riemannian triple. Then the following inequality holds:
\begin{eqnarray}\label{almost Schur2}
\int_M \left|{\rm R}-\overline{{\rm R}}^V\right|^2 \leq \frac{4n(n-1)}{(n-2)^2}\int_M V \left|{\rm Ric}-\frac{{\rm R}}{n}\bar g\right|^2,
\end{eqnarray}
where $\overline{{\rm R}}^V=\frac{\int_M V{\rm R}}{\int_M V}$. 
Moreover, if there is one point in $M$ at which the strict sub-static inequality holds, then equality in \eqref{almost Schur2} holds if and only if $M$ is Einstein.
\end{theo}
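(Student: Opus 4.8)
The plan is to treat the closed manifold $M$ exactly as the closed hypersurface $\S$ was treated in the proof of Theorem \ref{almost Schur}, with the operator $\De-\frac{\De V}{V}$ replaced by $L:=\bd-\frac{\bd V}{V}$ and with the scalar curvature ${\rm R}$ playing the role that $H$ played there. First I would solve, by the Fredholm alternative, the elliptic equation
\[
\bd f-\frac{\bd V}{V}f={\rm R}-\overline{{\rm R}}^V \quad \text{on } M.
\]
Since $M$ is closed and $V>0$, the operator $L$ is self-adjoint on $L^2(M,d\O)$ and satisfies $LV=0$; writing $Lf=\frac1V\div(V^2\bn(f/V))$ shows that its kernel consists exactly of the multiples of $V$. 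Hence the equation is solvable precisely when the right-hand side is $L^2$-orthogonal to $V$, i.e. when $\int_M V({\rm R}-\overline{{\rm R}}^V)\,d\O=0$, and this is automatic from the definition $\overline{{\rm R}}^V=\frac{\int_M V{\rm R}}{\int_M V}$.

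Next I would multiply the equation by $V({\rm R}-\overline{{\rm R}}^V)$ and integrate to obtain $\int_M V|{\rm R}-\overline{{\rm R}}^V|^2=\int_M({\rm R}-\overline{{\rm R}}^V)(V\bd f-f\bd V)$, and then integrate by parts twice. Because $V\bd f-f\bd V=\div(V\bn f-f\bn V)$ and $\overline{{\rm R}}^V$ is constant, the first integration produces $-\int_M {\rm R}_{,j}(Vf_{,j}-fV_{,j})$; the contracted second Bianchi identity ${\rm R}_{,j}=2R_{ij,i}$ together with a second integration by parts, in which the antisymmetric term $V_{,i}f_{,j}-f_{,i}V_{,j}$ is annihilated by the symmetric tensor $R_{ij}$, yields
\[
\int_M V\left|{\rm R}-\overline{{\rm R}}^V\right|^2=2\int_M V R_{ij}A_{ij},\qquad A_{ij}:=f_{,ij}-\frac{V_{,ij}}{V}f.
\]
Writing $R_{ij}=\mathring{R}_{ij}+\frac{{\rm R}}{n}\bar g_{ij}$ and $A_{ij}=\mathring{A}_{ij}+\frac{A}{n}\bar g_{ij}$, where $A:=\bd f-\frac{\bd V}{V}f={\rm R}-\overline{{\rm R}}^V$ is the trace of $A_{ij}$, and using the elementary identity $\int_M V\frac{{\rm R}}{n}A=\frac1n\int_M V|{\rm R}-\overline{{\rm R}}^V|^2$, this reduces to the clean relation
\[
\frac{n-2}{n}\int_M V\left|{\rm R}-\overline{{\rm R}}^V\right|^2=2\int_M V\mathring{R}_{ij}\mathring{A}_{ij}.
\]

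The final step applies the weighted Cauchy-Schwarz inequality to the right-hand side and then controls $\int_M V|\mathring{A}_{ij}|^2$ by the integral formula \eqref{equ10.5 nc} with empty boundary: since $M$ is closed and sub-static, \eqref{static potential inequality} gives $\int_M VA^2\ge \int_M V|A_{ij}|^2$, whence $\int_M V|\mathring{A}_{ij}|^2=\int_M V|A_{ij}|^2-\frac1n\int_M VA^2\le \frac{n-1}{n}\int_M VA^2$. Substituting this and cancelling one factor of $\big(\int_M V|{\rm R}-\overline{{\rm R}}^V|^2\big)^{1/2}$ produces exactly the constant $\frac{4n(n-1)}{(n-2)^2}$ and the asserted inequality.

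For the equality discussion I would trace the two inequalities backwards. Equality in Cauchy-Schwarz forces $\mathring{A}_{ij}=\lambda\mathring{R}_{ij}$ for a constant $\lambda$, which the relations above pin down as $\lambda=\frac{2(n-1)}{n-2}\neq 0$; equality in the integral-formula step forces $\int_M V\,Q\big(\bn f-\frac{\bn V}{V}f,\bn f-\frac{\bn V}{V}f\big)=0$, so $\bn(f/V)$ lies in the kernel of $Q$ at every point. At a point where the sub-static inequality is strict $Q$ is positive-definite, so $\bn(f/V)=0$ there and, by continuity, on a whole neighborhood $U$; on $U$ one then gets $f=cV$, hence $A_{ij}=0$ and therefore $\mathring{R}_{ij}=0$, i.e. $M$ is Einstein on $U$. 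The hard part will be propagating this to all of $M$: unlike the hypersurface situation of Theorem \ref{almost Schur}, where umbilicity fed directly back into positivity of the relevant tensor and made $\{Q>0\}$ simultaneously open and closed, being Einstein does not obviously force $Q>0$, so I expect to need a separate connectedness or unique-continuation argument for the overdetermined relation $\mathring{A}_{ij}=\lambda\mathring{R}_{ij}$ coupled with the condition $\bn(f/V)\in\ker Q$. Conversely, if $M$ is Einstein then Schur's lemma makes ${\rm R}$ constant, so both sides of \eqref{almost Schur2} vanish and equality holds trivially.
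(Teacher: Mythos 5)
Your proof of the inequality itself is correct, and it is exactly the argument the paper intends: the paper omits the proof of this theorem with the remark that it ``is the same as that of Theorem \ref{almost Schur}'', and your reconstruction is precisely that adaptation --- Fredholm solvability of $\bd f-\frac{\bd V}{V}f={\rm R}-\overline{{\rm R}}^V$ (kernel and cokernel of the operator being ${\rm span}\{V\}$), integration by parts with the contracted second Bianchi identity ${\rm R}_{,j}=2R_{ij,i}$ playing the role that the Codazzi equation played on the hypersurface, the trace decomposition giving $\frac{n-2}{n}\int_M V|{\rm R}-\overline{{\rm R}}^V|^2=2\int_M V\mathring{R}_{ij}\mathring{A}_{ij}$, Cauchy--Schwarz, and finally the boundaryless case of \eqref{equ10.5 nc} together with \eqref{static potential inequality} to bound $\int_M V|\mathring{A}_{ij}|^2$ by $\frac{n-1}{n}\int_M VA^2$. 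Two remarks. First, your argument (like the paper's) tacitly requires $V>0$, which the statement does not say but must intend, since otherwise even $\overline{{\rm R}}^V$ can be ill-defined. Second, what you actually prove carries the weight $V$ on the left-hand side, $\int_M V|{\rm R}-\overline{{\rm R}}^V|^2$, whereas the printed \eqref{almost Schur2} does not; the printed version must be a typo, because rescaling $V\to\epsilon V$ preserves sub-staticity and $\overline{{\rm R}}^V$ but scales only the right-hand side, so the unweighted inequality cannot hold in general. In other words, you have proved the statement the paper's method actually yields.

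The one genuine gap is the one you flagged yourself: the ``only if'' direction of the equality case. Your reduction is right --- equality forces $\mathring{A}_{ij}=\frac{2(n-1)}{n-2}\mathring{R}_{ij}$ and $Q\bigl(\bn(f/V),\bn(f/V)\bigr)\equiv 0$, hence $f=cV$, $A_{ij}=0$ and $\mathring{R}_{ij}=0$ on a neighborhood $U$ of the point where $Q>0$ --- but the propagation to all of $M$ is left open, and your worry is well-founded. The open-and-closed argument in Theorem \ref{almost Schur} hinges on the extrinsic identity \eqref{convexity}, which converts the local conclusion (umbilicity $h=cg$ with $c>1$) back into strict positivity of the sub-static tensor on the closure of the neighborhood, so that $\{Q>0\}$ is both open and closed. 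For an abstract closed sub-static triple there is no such feedback: knowing ${\rm Ric}=\frac{\overline{{\rm R}}^V}{n}\bar g$ on $U$ only gives $VQ=\bd V\bar g-\bn^2V+\frac{\overline{{\rm R}}^V}{n}V\bar g$ there, which need not remain positive definite up to $\p U$, so $\{Q>0\}$ is not obviously closed and the paper's propagation does not transfer verbatim. Some additional input (for instance a unique-continuation argument for the coupled conditions $\bn(f/V)\in\ker Q$ and $\mathring{A}=\frac{2(n-1)}{n-2}\mathring{R}$) is needed to finish. To be fair, the paper supplies nothing here either --- it omits the proof entirely --- so your proposal is as complete as the paper's own treatment and your flag identifies a real imprecision in the claim that ``the proof is the same''; but as a self-contained proof of the stated rigidity assertion, this step is missing.
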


Since the proof is the same as  that of Theorem  \ref{almost Schur}. So we omit it.

\

\

{\bf Acknowledgements.} This paper has been done while both authors visited Department of Mathematics at McGill University. The results have been presented at McGill Geometric Analysis Seminar. We would like to thank the department for its hospitality, Professor Pengfei Guan for his constant support, and Professor Niky Kamran for his interests. 

\


\begin{thebibliography}{99}

    \bibitem{B}Bray, Hubert L. {\em Proof of the Riemannian Penrose inequality using the positive mass theorem.} J. Differential Geom. 59 (2001), no. 2, 177-€"267. 
    
\bibitem{Br} Brendle, S., {\it Constant mean curvature surfaces in warped product manifolds,} Publ. Math. Inst. Hautes \'Etudes Sci. 117 (2013), 247-269.

\bibitem{BE}Brendle, S.; Eichmair, M., {\em Isoperimetric and Weingarten surfaces in the Schwarzschild manifold.} J. Differential Geom. 94 (2013), no. 3, 387-407.  

\bibitem{BHW} Brendle, S.;  Hung,  P.K. ;   Wang, M.T., {\it A Minkowski-type inequality for hypersurfaces in the Anti-deSitter-Schwarzschild manifold,} Comm. Pure Applied Math. 69 (2016) Issue 1, 124-144.

\bibitem{BW}  Brendle, S.; Wang, M.-T., A Gibbons-Penrose inequality for surfaces in Schwarzschild spacetime. Comm. Math. Phys. 330 (2014), no. 1, 33-43.

\bibitem{CZ}  Cheng, X.; Zhou, D., Rigidity for closed totally umbilical hypersurfaces in space forms. J. Geom. Anal. 24 (2014), no. 3, 1337-1345.

\bibitem{Cor} Corvino, J., Scalar curvature deformation and a gluing construction for the Einstein constraint equations. Comm. Math. Phys. 214 (2000), no. 1, 137-189.

\bibitem{dLT} De Lellis, C.; Topping, P. M., Almost-Schur lemma. Calc. Var. Partial Differential Equations 43 (2012), no. 3-4, 347-354.

\bibitem{FM} Fisher, A. E.;  Marsden, J.,  Deformations of the scalar curvature. Duke Math. J. 42 (1975), no. 3, 519-547.

\bibitem{GL} Guan, P., Li, J., The quermassintegral inequalities for k-convex starshaped domains. Adv. Math. 221 (2009), no. 5, 1725-1732.


\bibitem{GW} Ge, Y.; Wang, G., An almost Schur theorem on 4-dimensional manifolds. Proc. Amer. Math. Soc. 140 (2012), no. 3, 1041-1044.

\bibitem{GWWX} Ge, Y.; Wang, G.; Wu, J.; Xia, C., A Penrose inequality for graphs over Kottler space. Calc. Var. Partial Differential Equations 52 (2015), no. 3-4, 755-782. 
 
 \bibitem{Pe} Perez, D., On nearly umbilical hypersurfaces, PhD Thesis, Universit\"at Z\"urich,  2011.
 
 \bibitem{Re1} Reilly, R. C., On the Hessian of a function and the curvatures of its graph. Michigan Math. J. 20 (1973), 373-383.
 
 \bibitem{Re2}  Reilly, R. C., Applications of the Hessian operator in a Riemannian manifold. Indiana Univ. Math. J. 26 (1977), no. 3, 459-472.
 
 \bibitem{Re3} Reilly, R. C., Geometric applications of the solvability of Neumann problems on a Riemannian manifold. Arch. Rational Mech. Anal. 75 (1980), no. 1, 23-29.
 
\bibitem{QX} Qiu, G.; Xia, C., {\em A Generalization of Reilly's Formula and its Applications to a New Heintze–Karcher Type Inequality.} Int. Math. Res. Not. IMRN 2015, no. 17, 7608-7619. 

\bibitem{Ros} Ros, A., {\it Compact hypersurfaces with constant higher order mean curvatures,}   Revista Mathm\'atica
Iberoamericana, 3 (1987) 447-453.

\bibitem{WW} Wang, Xiaodong; Wang, Y.-K. {\em Brendle's inequality on static manifolds}. 	arXiv:1603.00379
    
\bibitem{WWZ} Wang, M.-T.; Wang, Y.-K.; Zhang, X., Minkowski formulae and Alexandrov theorems in spacetime, J. Differential Geom. to appear. arXiv: 1409.2190.

\bibitem{Xia} Xia, C., A Minkowski type inequality in space forms, preprint, arXiv:1508.03558v2.

\end{thebibliography}
\end{document}